\crefname{assumption}{Assumption}{Assumptions}
\crefname{figure}{Figure}{Figures}
\theoremstyle{plain}
\newtheorem{theorem}{Theorem}[section]
\newtheorem{lemma}[theorem]{Lemma}
\numberwithin{equation}{section}
\theoremstyle{definition}
\theoremstyle{remark}
\newtheorem{remark}[theorem]{Remark}
\newtheorem{assumption}[theorem]{Assumption}
\newtheorem{example}[theorem]{Example}
\setlist[itemize]{leftmargin=.5in}
\setlist[enumerate]{leftmargin=.5in,topsep=3pt,itemsep=3pt,label=(\roman*)}
\newcommand{\email}[1]{\href{#1}{#1}}
\newcommand{\TheTitle}{A method of moments estimator for interacting particle systems and their mean field limit}
\newcommand{\TheAuthors}{G. A. Pavliotis, A. Zanoni}
\title{\TheTitle}
\author{Grigorios A. Pavliotis \thanks{Department of Mathematics, Imperial College London, London SW7 2AZ, UK, \email{g.pavliotis@imperial.ac.uk}.}
\and Andrea Zanoni \thanks{Institute of Mathematics, École Polytechnique Fédérale de Lausanne, 1015 Lausanne, Switzerland, \email{andrea.zanoni@epfl.ch}. Institute for Computational and Mathematical Engineering, Stanford University, Stanford, CA, USA, \email{andrea.zanoni@stanford.edu}.}}
\date{}
\newcommand{\abs}[1]{\left\lvert#1\right\rvert}
\newcommand{\norm}[1]{\left\|#1\right\|}
\renewcommand{\Pr}{\mathbb{P}}
\newcommand{\N}{\mathbb{N}}
\newcommand{\R}{\mathbb{R}}
\newcommand{\defeq}{\coloneqq}
\newcommand{\eqdef}{\eqqcolon}
\newcommand{\E}{\operatorname{\mathbb{E}}}
\newcommand{\argmin}{\operatorname{argmin}}
\newcommand{\compl}{\mathsf{C}}
\renewcommand{\d}{\mathrm{d}}
\newcommand{\dd}{\,\mathrm{d}}
\definecolor{shade}{RGB}{100, 100, 100}
\definecolor{bordeaux}{RGB}{128, 0, 50}
\definecolor{leg1}{RGB}{0,114,189}
\definecolor{leg2}{RGB}{217,83,25}
\definecolor{leg3}{RGB}{237,177,32}
\definecolor{leg4}{RGB}{126,47,142}
\definecolor{leg5}{RGB}{119,172,48}
\definecolor{leg21}{RGB}{62,38,169}
\definecolor{leg22}{RGB}{46,135,247}
\definecolor{leg23}{RGB}{55,200,151}
\definecolor{leg24}{RGB}{254,195,56}
\begin{document}
	
\maketitle	

\begin{abstract} 
We study the problem of learning unknown parameters in stochastic interacting particle systems with polynomial drift, interaction and diffusion functions from the path of one single particle in the system. Our estimator is obtained by solving a linear system which is constructed by imposing appropriate conditions on the moments of the invariant distribution of the mean field limit and on the quadratic variation of the process. Our approach is easy to implement as it only requires the approximation of the moments via the ergodic theorem and the solution of a low-dimensional linear system. Moreover, we prove that our estimator is asymptotically unbiased in the limits of infinite data and infinite number of particles (mean field limit). In addition, we present several numerical experiments that validate the theoretical analysis and show the effectiveness of our methodology to accurately infer parameters in systems of interacting particles.
\end{abstract}

\textbf{AMS subject classifications.} 35Q70, 35Q83, 35Q84, 60J60, 62M20, 65C30.

\textbf{Key words.} Interacting particle system, mean field limit, inference, Fokker–Planck equation, moments, ergodicity, linear system, polynomials.

\section{Introduction}

The problem of inference for stochastic differential equations (SDEs) has a long history~\cite{BaR80,Rao99,Kut04,Sor04,Bis08,Iac08}. Numerous different techniques, such as maximum likelihood estimation and Bayesian approaches, have been developed and analyzed extensively, both in a parametric and non-parametric setting. Most of the work in the aforementioned references is focused on inference for linear, in the sense of McKean, SDEs, i.e., that the drift and diffusion coefficients do not depend on the law of the process. In recent years, several studies have been devoted to the problem of learning parameters in nonlinear SDEs, for which the drift and possibly also the diffusion coefficients depend on the law of the process~\cite{LaL23,DeH23,MeB22,AHP23,CoG22,YCY22,GeL22}. The interest in inference for McKean SDEs, and of the corresponding McKean--Vlasov partial differential equations (PDEs), is due to the many applications in which such nonlinear models appear, for example systemic risk~\cite{GPY13}, collective behaviour such as flocking and swarming~\cite{NPT10}, pedestrian dynamics~\cite{GSW19}, opinion formation~\cite{GGS22}, and neuroscience~\cite{Dai19}, to name but a few. McKean nonlinear SDEs appear in the mean field limit of systems of weakly interacting diffusions~\cite{Oel84,Daw83}. The link between finite dimensional diffusion processes, describing the interacting particle system, and their mean field limit has been used in order to study the maximum likelihood estimator~\cite{Kas90,Bis11}.

In earlier work, we studied the problem of inference for McKean SDEs using two different approaches. In particular, in~\cite{SKP23} we used the stochastic gradient descent method, both in an online and offline setting, and in~\cite{PaZ22} we employed the eigenfunction martingale estimator~\cite{KeS99} to learn parameters in both the confining and interaction potentials, as well as the diffusion coefficient, of the McKean SDE in one dimension. More precisely, we showed that, under the assumptions from~\cite{Mal01}, the eigenfunction martingale estimator, obtained by solving the eigenvalue problem for the generator of the McKean SDE linearized around the (unique) stationary state, is asymptotically unbiased and normal in the limit as the number of particles and observations go to infinity. We emphasize the fact that, for our methodology to work, it is sufficient to observe only one particle. Furthermore, even though the theoretical analysis is valid under the assumption that the mean field dynamics has a unique stationary state, we demonstrated by means of numerical experiments that our method works also when multiple stationary states exist, i.e., when the mean field dynamics exhibit phase transitions~\cite{CGP20}.

We notice that the methodology proposed and analyzed in~\cite{PaZ22}, even though it is elegant, requires observation of only one particle and is provably asymptotically unbiased and normal, it suffers from the drawback that it is computationally expensive. This is due to the fact that we need to solve repeatedly the eigenvalue problem for the generator of the linearized McKean SDE. This means, in particular, that it might be impractical to apply our method to problems where many parameters need to be learned from data, as well as in the multidimensional setting. In this paper, we propose a very simple, robust and computationally efficient methodology that can be applied to McKean SDEs with polynomial nonlinearities. In particular, we show that the classical method of moments can be employed to learn parameters in McKean SDEs with polynomial drift and diffusion coefficients. Similarly to the eigenfunction martingale estimator, for our method to work, it is sufficient to observe a single particle of the interacting particle system, whose mean field limit leads to the McKean SDE. For our theoretical analysis, we need to consider continuous, as opposed to discrete, observations, at least when the diffusion coefficient is not known. Furthermore, we need to assume that the McKean SDE has a unique invariant measure, even though numerical experiments suggest that this is not necessary in practice. Under these assumptions, we can show that our estimator is asymptotically unbiased when the number of particles and the time horizon tend to infinity, and provide a rate of convergence.

The method of moments is a very standard methodology in statistics~\cite[Section 3.3]{Pan16}. It has also been used in the study of the (linear and nonlinear) Fokker--Planck equation, as a numerical technique~\cite{WiB70}, as well as a mode reduction method~\cite{ZPL23}. Moreover, it was also used in~\cite{Daw83} for proving rigorously that the Desai--Zwanzig model exhibits a phase transition. To our knowledge, this simple and straightforward approach has not yet been applied to the problem of inference for McKean SDEs. The empirical moments have been employed in \cite{BPP23} to construct a kernel based estimator to approximate a polynomial interaction function in McKean SDEs. However, their approach is different from ours and they assume to observe all the particles only at the final time instead of the full path of a single particle. Moreover, they do not include a confining potential in their analysis, and they assume the diffusion to be constant, while we consider more general polynomial confining potentials and diffusion functions. We also mention the recent work~\cite{GeL22}, in which the inference problem for McKean SDEs in one dimension with no confining potential, but with a polynomial interaction potential, was studied. Under the assumptions of stationarity and for continuous-time observations, an approximate likelihood function was constructed using the empirical moments of the invariant distribution. Even though this work is related to ours, our approach and the result obtained in this paper are different. First, we consider the case with a confining potential, and we estimate parameters in both the confining and the interaction potentials, as well as in the diffusion coefficient, then we use directly the empirical moments for the estimation of the parameters in the drift and diffusion coefficients, without having to construct an approximate likelihood. Moreover, we apply our methodology to problems where the invariant measure of the mean field dynamics is not unique (i.e., when the system exhibits phase transitions), and we also show that our approach applies also to problems with a non-gradient structure and with degenerate noise. Finally, when the diffusion coefficient is known, we demonstrate that it is sufficient to consider discrete observations, in order to estimate parameters in the drift.

We finally remark that our setting where the confining and interaction potentials and the diffusion function have a polynomial form is not a strong limitation of the scope of this work. In fact, e.g., phase transitions for long time behavior of interacting particles with linear interaction and quadratic diffusion has been studied in \cite{GrH07}. Moreover, there are several McKean SDEs that appear in applications and that have polynomial drift coefficients, such as the Desai--Zwanzig model~\cite{Daw83} and the interacting Fitzhugh--Nagumo neurons model~\cite{Dai19}. In the numerical experiments we demonstrate the usefulness of the method of moments by applying it to both models.

\subsection*{Our main contributions}

We now summarize the main contributions of this work.

\begin{itemize}[leftmargin=0.5cm]
\item We present a novel approach for inferring parameters in interacting particle systems when the drift, interaction and diffusion functions are polynomial. We assume to observe the path of one single particle and we construct a linear system for the parameters to be estimated, and which depends on the moments of the invariant distribution of the mean field limit and on the quadratic variation of the process.
\item We prove that our estimator is asymptotically unbiased in the sense that it converges in $L^1$ to the true value of the unknown parameters when the observation time and the number of interacting particles go to infinity. The proof is based on an a priori forward error stability analysis for the linear system for the unknown parameters.
\item We present various numerical experiments with the multiple purpose of validating the theoretical analysis, showing the accuracy of our estimator and demonstrating numerically that our approach can be employed even in cases where some of the assumptions are not satisfied.
\end{itemize}

\subsection*{Outline}

The rest of the paper is organized as follows. In \cref{sec:problem} we introduce the model under investigation, the inference problem and the main assumptions. In \cref{sec:method} we present the method of moments and in \cref{sec:analysis} we prove theoretically the asymptotic unbiasedness of our estimator. In \cref{sec:experiments} we show several numerical experiments to illustrate the advantages of our methodology, and finally in \cref{sec:conclusion} we draw our conclusions and outline possible developments of this approach.

\section{Problem setting} \label{sec:problem}

In this work we consider the following system of one-dimensional interacting particles over the time interval $[0,T]$
\begin{equation} \label{eq:systemIP}
\begin{aligned}
\d X_t^{(n)} &= f(X_t^{(n)};\alpha) \dd t + \frac1N \sum_{i=1}^{N} g(X_t^{(n)} - X_t^{(i)};\gamma) \dd t + \sqrt{2 h(X_t^{(n)};\sigma)} \dd B_t^{(n)}, \\
X_0^{(n)} &\sim \nu, \qquad n = 1, \dots, N,
\end{aligned}
\end{equation}
where $N$ is the number of particles, $\{ (B_t^{(n)})_{t\in[0,T]} \}_{n=1}^N$ are standard independent one dimensional Brownian motions, and $\nu$ is the initial distribution of the particles, which is assumed to be independent of the Brownian motions $\{ (B_t^{(n)})_{t\in[0,T]} \}_{n=1}^N$. We remark that we assume chaotic initial conditions, meaning that all the particles are initially distributed according to the same measure. The functions $f,g$ and $h$ are the drift, interaction, and diffusion functions, respectively, which depend on some parameters $\alpha \in \R^{J+1}, \gamma \in \R^{K+1}$ and $\sigma \in \R^{L+1}$. In this article we consider polynomial functions of the form
\begin{equation}
f(x;\alpha) = \sum_{j=0}^J \alpha_j x^j, \qquad g(x;\gamma) = \sum_{k=0}^K \gamma_k x^k, \qquad \text{and} \qquad h(x;\sigma) = \sum_{\ell=0}^L \sigma_\ell x^\ell,
\end{equation}
and we denote by $\theta \in \Theta \subseteq \R^P$ the subvector of $\upsilon = \begin{pmatrix} \alpha^\top & \gamma^\top & \sigma^\top \end{pmatrix}^\top \in \R^{J+K+L+3}$ with the unknown components which we aim to estimate. Moreover, $\Theta$ is the set of admissible parameters and $\theta^*$ is the exact value of the vector $\theta$. Our goal is then to infer $\theta^*$ from the continuous path of the realization of one single particle $(X_t^{(\bar n)})_{t\in[0,T]}$ of system \eqref{eq:systemIP}.
\begin{remark}
Considering polynomial functions is not a strong limitation of the scope of our work. In fact, this setting can be seen as a semiparameteric framework where we aim to estimate the whole drift, interaction, and diffusion functions, provided that they are sufficiently smooth and can therefore be approximated by polynomials.
\end{remark}
We focus our attention on large systems, i.e., when the number of interacting particles is $N \gg 1$, and therefore it is reasonable to look at the mean field limit (see, e.g., \cite{Daw83,Gar88}), which provides a good approximation of the behavior of a single particle in the system. In particular, letting $N \to \infty$ in \eqref{eq:systemIP} we obtain the nonlinear, in the sense of McKean, SDE
\begin{equation} \label{eq:MFL}
\begin{aligned} 
\d X_t &= f(X_t;\alpha) \dd t + (g(\cdot;\gamma) * u(\cdot,t))(X_t) \dd t + \sqrt{2h(X_t;\sigma)} \dd B_t, \\
X_0 &\sim \nu,
\end{aligned}
\end{equation}
where $u(\cdot;t)$ is the density with respect to the Lebesgue measure of the law of $X_t$, and satisfies the nonlinear Fokker--Planck equation named McKean--Vlasov equation
\begin{equation} 
\begin{aligned}
\frac{\partial u}{\partial t}(x,t) &= -\frac{\partial}{\partial x} \left( f(x;\alpha)u(x,t) + (g(\cdot;\gamma)*u(\cdot,t))(x)u(x,t) \right) + \frac{\partial^2}{\partial x^2} \left( h(x;\sigma) u(x,t) \right), \\
u(x,0) \dd x &= \nu(\d x).
\end{aligned}
\end{equation}
We notice that the SDE \eqref{eq:MFL} is said to be nonlinear in the sense that the drift function depends on the law of the process. Moreover, the mean field can have multiple invariant measures $\mu(\d x) = \rho(x) \dd x$ (see, e.g., \cite{Daw83,CGP20}), whose densities with respect to the Lebesgue measure $\rho$ solve the stationary Fokker--Planck equation
\begin{equation} \label{eq:FPstationary}
- \frac{\d}{\d x} \left( f(x;\alpha)\rho(x) + (g(\cdot;\gamma)*\rho)(x)\rho(x) \right) + \frac{\d^2}{\d x^2} \left( h(x;\sigma) \rho(x) \right)  = 0.
\end{equation}
However, for the following convergence analysis we will work under the assumption that equation \eqref{eq:FPstationary} admits a unique solution. Nevertheless, we observe from the numerical experiments in \cref{sec:num_bistable} that this assumption does not restrict the range of applicability of the method proposed in this paper from a practical point of view, indeed we notice that technique presented here works even in presence of multiple invariant states. The main hypotheses needed in the analysis are then summarized below.
\begin{assumption} \label{as:analysis}
The set of admissible parameters $\Theta \subseteq \R^P$ is convex and compact. Moreover, for all $\theta \in \Theta$ it holds:
\begin{itemize}[leftmargin=0.5cm]
\item $h(x;\sigma) > 0$ for all $x \in \R$,
\item the solution $X_t$ of \eqref{eq:MFL} is ergodic with unique invariant measure $\mu(\d x) = \rho(x) \dd x$ whose density $\rho$ solves equation \eqref{eq:FPstationary}.
\end{itemize}
\end{assumption}
\begin{example} \label{ex:Malrieu}
\cref{as:analysis} is satisfied if we consider, e.g., the setting of \cite{Mal01}. In particular, the uniqueness of the invariant measure of the mean field limit is given in the case of additive noise, i.e., $h(x;\sigma) = \sigma_0 > 0$, and when the drift and interaction functions have the form $f(x;\alpha) = -V'(x;\alpha)$ and $g(x;\gamma) = -W'(x;\gamma)$ for some confining potential $V(\cdot;\alpha) \in \mathcal C^2(\R)$ which is uniformly convex (i.e., there exists $\beta>0$ such that $V''(x) \ge \beta$ for all $x \in \R$) and some interaction potential $W(\cdot;\gamma) \in \mathcal C^2(\R)$ which is even and convex. In this case the density $\rho$ of the invariant measure is the solution of
\begin{equation}
\rho(x) = \frac1Z \mathrm{exp} \left\{ - \frac1{\sigma_0} (V(x;\alpha) + (W(\cdot;\kappa)*\rho)(x)) \right\},
\end{equation}
where $Z$ is the normalization constant
\begin{equation}
Z = \int_\R \mathrm{exp} \left\{ - \frac1{\sigma_0} (V(x;\alpha) + (W(\cdot;\kappa)*\rho)(x)) \right\} \dd x.
\end{equation}
Moreover, in \cite[Theorem 3.18]{Mal01}, it is proved that there exists a constant $C>0$ independent of time such that
\begin{equation}
\norm{u(\cdot,t) - \rho}_{L^1(\R)} \le Ce^{-\beta t/2},
\end{equation}
and therefore the density $u(\cdot,t)$ converges exponentially fast to the invariant density $\rho$.
\end{example}
\begin{remark}
For the clarity of the exposition we focus here on systems of $N$ interacting particles in one dimension. Nevertheless, the proposed method can be easily extended to the case of $d$-dimensional particles with $d>1$ as we will see in the numerical experiment in \cref{sec:num_DaiPra}.
\end{remark}

\section{Method of moments} \label{sec:method}

We now present our approach for learning the unknown parameter $\theta \in \Theta \subseteq \R^P$ from continuous observations of one single particle of the system \eqref{eq:systemIP}. We assume to know a realization $(X_t^{(\bar n)})_{t\in[0,T]}$ of the $\bar n$-th particle of the solution of \eqref{eq:systemIP} for some $\bar n = 1, \dots, N$, and we aim to retrieve the exact value of the unknown parameter $\theta$ by means of the method of moments. Letting $M \in \N$, we first multiply equation \eqref{eq:FPstationary} by $x^m$ for $m = 1, \dots, M$, integrate over $\R$ and then by parts to obtain 
\begin{equation} \label{eq:moments_m}
\int_\R x^{m-1} f(x;\alpha) \rho(x) \dd x + \int_\R x^{m-1} (g(\cdot;\gamma)*\rho)(x) \rho(x) \dd x + (m-1) \int_\R x^{m-2} h(x;\sigma) \rho(x) \dd x = 0.
\end{equation}
Replacing the definitions of $f$ and $g$ yields for the drift term
\begin{equation} \label{eq:drift_moment}
\int_\R x^{m-1} f(x;\alpha) \rho(x) \dd x = \sum_{j=0}^J \alpha_j \mathbb M^{(m+j-1)},
\end{equation}
and for the diffusion term
\begin{equation} \label{eq:diffusion_moment}
\int_\R x^{m-2} h(x;\sigma) \rho(x) \dd x = \sum_{\ell=0}^L \sigma_\ell \mathbb M^{(m+\ell-2)},
\end{equation}
where $\mathbb M^{(r)}$ denotes the $r$-th moment with respect to the invariant measure $\mu$
\begin{equation}
\mathbb M^{(r)} = \E^{\mu}[X^r].
\end{equation}
Moreover, due to the binomial theorem, for the interaction term we have
\begin{equation} \label{eq:interaction_moment}
\begin{aligned}
\int_\R x^{m-1} (g(\cdot;\gamma)*\rho)(x) \rho(x) \dd x &= \sum_{k=0}^K \gamma_k \int_\R \int_\R x^{m-1} (x-y)^k \rho(y) \rho(x) \dd y \dd x \\
&= \sum_{k=0}^K \gamma_k \sum_{i=0}^k \binom{k}{i} \mathbb M^{(m+i-1)} \mathbb M^{(k-i)}.
\end{aligned}
\end{equation}
Collecting equation \eqref{eq:moments_m} together with the expansions \eqref{eq:drift_moment}, \eqref{eq:diffusion_moment} and \eqref{eq:interaction_moment} we obtain
\begin{equation} \label{eq:equation_moments}
\sum_{j=0}^J \alpha_j \mathbb M^{(m+j-1)} + \sum_{k=0}^K \gamma_k \sum_{i=0}^k (-1)^{k-i} \binom{k}{i} \mathbb M^{(m+i-1)} \mathbb M^{(k-i)} + \sum_{\ell=0}^L \sigma_\ell (m-1) \mathbb M^{(m+\ell-2)} = 0,
\end{equation}
and recalling the definition of $\theta$ as the subvector of $\upsilon = \begin{pmatrix} \alpha^\top & \gamma^\top & \sigma^\top \end{pmatrix}^\top \in \R^{J+K+L+3}$ with the unknown components, we can write a linear system for the exact value of $\theta$ of the form 
\begin{equation} \label{eq:system_exact_tilde}
\mathscr M \theta^* = \mathfrak b, 
\end{equation}
where the matrix $\mathscr M \in \R^{M \times P}$ and the right-hand side $\mathfrak b \in \R^{M}$ depend on the remaining known components of $\upsilon$ and on the moments $\mathbb M^{(r)}$ with $r = 0, \dots, M + \max \{ J-1, K-1, L-2 \}$. Hence, we obtained a set of constraints which have to be satisfied by the exact unknown. However, since we do not know the invariant measure $\mu$ and therefore its moments $\mathbb M^{(r)}$, we cannot construct the matrix $\mathscr M$ and the right-hand side $\mathfrak b$ in order to solve the linear system and get the value of $\theta$. Nevertheless, due to the ergodicity of the process, the exact moments can be approximated using the data by
\begin{equation}
\widetilde{\mathbb M}^{(r)}_{T,N} = \frac1T \int_0^T (X_t^{(\bar n)})^r \dd t,
\end{equation}
which leads to a first attempt for the definition of our estimator as the solution $\bar \theta^M_{T,N}$ of the approximated linear system
\begin{equation} \label{eq:linear_system_tilde}
\widetilde {\mathscr M}_{T,N} \bar \theta^M_{T,N} = \widetilde{\mathfrak b}_{T,N},
\end{equation}
where the matrix $\widetilde {\mathscr M}_{T,N}$ and the right-hand side $\widetilde{\mathfrak b}_{T,N}$ are obtained from $\mathscr M$ and $\mathfrak b$, respectively, by replacing the exact moments $\mathbb M^{(r)}$ with their approximation $\widetilde{\mathbb M}^{(r)}_{T,N}$. Let us now remark that it may happen for the right-hand side to be $\widetilde{\mathfrak b}_{T,N} = 0$, if, e.g., all the parameters are unknown ($\theta = \upsilon$). In this case it is necessary to exclude the trivial solution $\bar \theta^M_{T,N} = 0$, because it is clearly not the one which we are looking for. Hence, we can then augment the system with an additional equation derived from the definition of the quadratic variation. In particular, let $Q_{T,N}$ be the quadratic variation of the process $(X_t^{(\bar n)})_{t \in [0,T]}$ and notice that
\begin{equation}
Q_{T,N} = 2 \int_0^T h(X_t^{(\bar n)};\sigma) \dd t = 2 \sum_{\ell=0}^L \sigma_\ell \int_0^T (X_t^{(\bar n)})^\ell \dd t,
\end{equation}
which implies 
\begin{equation} \label{eq:equationQV}
\sum_{\ell=0}^L \sigma_\ell \widetilde{\mathbb M}_{T,N}^{(\ell)} = \frac{Q_{T,N}}{2T}.
\end{equation}
Letting $\Delta>0$ be sufficiently small, then the quadratic variation $Q_{T,N}$ can be approximated by the quantity 
\begin{equation} \label{eq:approximationQV}
\widetilde Q_{T,N} = \sum_{i=1}^I (X_{(i+1)\Delta}^{(\bar n)} - X_{i\Delta}^{(\bar n)})^2,
\end{equation}
where $I = T/\Delta$. Since we know a continuous trajectory $(X_t^{(\bar n)})_{t \in[0,T]}$, then $\Delta$ can be chosen arbitrarily small, and hence we assume $Q_{T,N}$ to be known exactly. We are now ready to define our estimator starting from the solution $\widetilde \theta^M_{T,N} \in \Theta$ of the linear system
\begin{equation} \label{eq:linear_system_hat}
\widetilde{\mathcal M}_{T,N} \widetilde \theta^M_{T,N} = \widetilde b_{T,N},
\end{equation}
which is obtained by adding equation \eqref{eq:equationQV} to the system \eqref{eq:linear_system_tilde} and where therefore $\widetilde{\mathcal M}_{T,N} \in \R^{(M+1)\times P}$ and $\widetilde b_{T,N} \in \R^{M+1}$. We remark that the value $M$ has to be chosen such that the number of equations of the system \eqref{eq:linear_system_hat} is greater or equal than the number of parameters, i.e., $M \ge P-1$. Then, if $M = P-1$ and the matrix $\widetilde{\mathcal M}_{T,N}$ has full rank, the system has a unique solution, otherwise we can compute the minimum-norm least squares solution through the Moore–Penrose pseudoinverse. Moreover, since the parameter $\theta^* \in \Theta$, the estimator $\widehat \theta^M_{T,N} \in \Theta$ is finally defined as the projection of the solution of the linear system onto the convex and compact space $\Theta$
\begin{equation}
\widehat \theta^M_{T,N} = \mathcal P_\Theta(\widetilde \theta_{T,N}^M) = \mathcal P_\Theta (\widetilde{\mathcal M}_{T,N}^\dagger \widetilde b_{T,N}) \eqdef \argmin_{\theta \in \Theta} \norm{\theta - \widetilde{\mathcal M}_{T,N}^\dagger \widetilde b_{T,N}},
\end{equation}
where, for a matrix $A$, we denote by $A^\dagger$ its Moore–Penrose pseudoinverse. We remark that this definition is necessary in order to make the estimator well-defined but, as we will see later in the numerical experiments, it is not required in concrete applications in most of the cases. In fact, the least squares solution of system \eqref{eq:linear_system_hat} turns out to be always unique in practice. Moreover, for $N$ and $T$ large enough, it is also sufficiently close to the exact parameter, and therefore inside the set of admissible values $\Theta$, so that the projection is not needed. For completeness and since it will be useful in the following analysis, let us also introduce the linear system whose unique solution is the exact unknown parameter $\theta^*$
\begin{equation} \label{eq:system_exact_hat}
\mathcal M \theta^* = b,
\end{equation}
which is obtained by adding to the system \eqref{eq:system_exact_tilde} the following limit equation for the quadratic variation $Q_T$ of the mean field limit
\begin{equation} \label{eq:equationQV_limit}
\sum_{\ell=0}^L \sigma_\ell \mathbb M^{(\ell)} = Q \defeq \lim_{T\to\infty} \frac{Q_T}{2T}.
\end{equation}
In \cref{alg:procedure} we summarize the main steps needed to construct the estimator $\widehat \theta^M_{T,N}$.

\begin{algorithm}
\caption{Estimation of $\theta \in \Theta \subset \R^P$} \label{alg:procedure}
\begin{tabbing}
\hspace*{\algorithmicindent} \textbf{Input:} \= Drift function $f(\cdot;\alpha)$ with $\alpha \in R^{J+1}$. \\
\> Interaction function $g(\cdot;\gamma)$ with $\gamma \in R^{K+1}$. \\
\> Diffusion function $h(\cdot;\sigma)$ with $\sigma \in R^{L+1}$. \\
\> Observed particle $(X_t^{(\bar n)})_{t\in[0,T]}$. \\
\> Number of moments equations $M$.
\end{tabbing}
\begin{tabbing}
\hspace*{\algorithmicindent} \textbf{Output:} \= Estimation $\widehat \theta_{T,N}^M$ of $\theta$.
\end{tabbing}
\begin{enumerate}[label=\arabic*:,itemsep=5pt]
\item Compute the quadratic variation $Q_{T,N}$ using equation \eqref{eq:approximationQV} with \\ $\Delta$ sufficiently small.
\item Compute the approximated moments $\widetilde{\mathbb M}_{T,N}^{(r)}$ for $r = 0, \dots, M + \max \{ J-1, K-1, L-2 \}$.
\item Construct the matrix $\widetilde{\mathcal M}_{T,N}$ and the right-hand side $\widetilde b_{T,N}$ using \\ equations \eqref{eq:equation_moments} and \eqref{eq:equationQV} and steps 1 and 2.
\item Let $\widehat \theta_{T,N}^M$ be the projection onto $\Theta$ of the minimum-norm least squares solution of the linear system $\widetilde{\mathcal M}_{T,N} \theta = \widetilde b_{T,N}$.
\end{enumerate}
\end{algorithm}

\begin{remark}
The method of moments is outlined here in the context of continuous-time observations. Nevertheless, we remark that, as long as the diffusion term is known, the proposed methodology can be easily generalized to the case of discrete-time observations for which a numerical experiment is presented in \cref{sec:comparison_OU}. In fact, the approximation based on the ergodic theorem of the exact moments with respect to the invariant measure of the mean field dynamics can be repeated analogously with discrete-time observations instead of continuous-time observations.
\end{remark}

\begin{remark}
In this work we decided to consider the case where the path of one particle is observed, rather than of all the particles, since in concrete applications it is more likely to get measurements only from one single particle than from all the ensemble of particles. Nevertheless, we stress that if the path of more than one particle is observed, then we may follow two different approaches in order to use all the available information and improve the performances of our estimator. First, we could compute the method of moments estimator for each particle, and then take the average as final estimator. Otherwise, we could also estimate the moments by averaging the empirical moments computed for each particle, and finally solve the linear system. We believe that this second approach could potentially give better results, but the analysis of the method of moments estimator for multiple observable particles is out of the scope of this work.
\end{remark}

\subsection{The mean field Ornstein--Uhlenbeck process} \label{sec:OU}

We now employ the method of moments to a simple example involving the mean field Ornstein--Uhlenbeck process for which the exact moments can be computed analytically. We set $J = 1$, $K = 1$, $L = 0$ with $\alpha_0 = 0$, $\gamma_0 = 0$, $\gamma_1 = - 1$, so that we have
\begin{equation}
f(x;\alpha) = \alpha_1 x, \qquad g(x;\gamma) = -x, \qquad h(x;\sigma) = \sigma_0,
\end{equation}
where $\alpha_1 < 0$, $\sigma_0>0$, and therefore we obtain the interacting particle system
\begin{equation} \label{eq:system_OU}
\begin{aligned}
\d X_t^{(n)} &= \alpha_1 X_t^{(n)} \dd t - \left( X_t^{(n)} - \frac1N \sum_{i=1}^{N} X_t^{(i)} \right) \dd t + \sqrt{2 \sigma_0} \dd B_t^{(n)}, \\
X_0^{(n)} &\sim \nu, \qquad n = 1, \dots, N,
\end{aligned}
\end{equation}
whose mean field limit reads
\begin{equation} \label{eq:MFL_OU}
\begin{aligned}
\d X_t &= \alpha_1 X_t \dd t - \left( X_t - \E[X_t] \right) \dd t + \sqrt{2 \sigma_0} \dd B_t, \\
X_0 &\sim \nu.
\end{aligned}
\end{equation}
We then aim to estimate the two-dimensional parameter $\theta = \begin{pmatrix} \alpha_1 & \sigma_0 \end{pmatrix}^\top$. We remark that the inference problem for this particular case has been investigated in many works such as \cite{Kas90,Bis11}. It can be shown (see, e.g., \cite{Daw83,Fra05,GoP18}) that the process $X_t$ has a unique invariant measure 
\begin{equation}
\mu = \mathcal N \left( 0, \frac{\sigma_0}{1 - \alpha_1} \right),
\end{equation}
and therefore the moments $\mathbb M^{(r)}$ for $r \in \N$ are given by
\begin{equation} \label{eq:momentsOU}
\mathbb M^{(r)} = \begin{cases}
0 & \text{if $r$ is odd}, \\
\left( \dfrac{\sigma_0}{1 - \alpha_1} \right)^{r/2} (r-1)!! & \text{if $r$ is even}.
\end{cases}
\end{equation}
Moreover, equations \eqref{eq:equation_moments} and \eqref{eq:equationQV_limit} simplify to
\begin{equation}
\begin{aligned}
\alpha_1 \mathbb M^{(m)} + \sigma_0(m-1)\mathbb M^{(m-2)} &= \mathbb M^{(m)}, \qquad m=1,\dots,M, \\
\sigma_0 &= Q,
\end{aligned}
\end{equation}
which together with \eqref{eq:momentsOU} and the fact that $Q = \sigma_0$ allows us to write explicitly the linear system \eqref{eq:system_exact_hat}. In particular, we notice that only the even terms give useful equations and, taking $m = 2, 4, \dots, M$ with $M$ even, we get
\begin{equation} \label{eq:exact_b_OU}
b = \begin{pmatrix} \frac{\sigma_0}{1 - \alpha_1} & 3 \left( \frac{\sigma_0}{1-\alpha_1} \right)^2 & \cdots & \left( \frac{\sigma_0}{1-\alpha_1} \right)^{M/2} (M-1)!! & \sigma_0 \end{pmatrix}^\top \in \R^{M/2+1},
\end{equation}
and
\begin{equation} \label{eq:exact_M_OU}
\mathcal M = \begin{pmatrix}
\frac{\sigma_0}{1-\alpha_1} & 3 \left( \frac{\sigma_0}{1-\alpha_1} \right)^2 & \cdots & \left( \frac{\sigma_0}{1-\alpha_1} \right)^{M/2} (M-1)!! & 0 \\
1 & 3\left( \frac{\sigma_0}{1-\alpha_1} \right) & \cdots & \left( \frac{\sigma_0}{1-\alpha_1} \right)^{M/2-1} (M-1)!! & 1
\end{pmatrix}^\top \in \R^{(M/2+1) \times 2}.
\end{equation}
We can easily verify that the system is solved by $\theta$ and that the least squares solution is well posed as $\det(\mathcal M^\top \mathcal M) \neq 0$. Indeed, the matrix $\mathcal M$ can be rewritten as 
\begin{equation}
\mathcal M = \begin{pmatrix}
v & \frac{1 - \alpha_1}{\sigma_0} v \\
0 & 1
\end{pmatrix}, \qquad \text{where} \qquad v = \begin{pmatrix}
\frac{\sigma_0}{1-\alpha_1} & 3 \left( \frac{\sigma_0}{1-\alpha_1} \right)^2 & \cdots & \left( \frac{\sigma_0}{1-\alpha_1} \right)^{M/2} (M-1)!!
\end{pmatrix}^\top,
\end{equation}
which gives
\begin{equation}
\mathcal A = \mathcal M^\top \mathcal M = \begin{pmatrix}
\norm{v}^2 & \frac{1 - \alpha_1}{\sigma_0} \norm{v}^2 \\
\frac{1 - \alpha_1}{\sigma_0} \norm{v}^2 & \left(\frac{1 - \alpha_1}{\sigma_0} \right)^2 \norm{v}^2 + 1,
\end{pmatrix},
\end{equation}
which in turn implies $\det(\mathcal A) = \det(\mathcal M^\top \mathcal M) = \norm{v}^2 \neq 0$ since $v \neq 0$. 
 Finally, the other systems \eqref{eq:system_exact_tilde}, \eqref{eq:linear_system_tilde} and \eqref{eq:linear_system_hat} are then obtained in a similar way.

\begin{remark} \label{rem:OU_identifiable}
We assumed the interaction coefficient $\gamma_1$ to be known in order to guarantee the solvability of the problem. Otherwise it would only be possible to estimate the sum $\alpha_1 + \gamma_1$. In fact, the method of moments relies on the mean field limit \eqref{eq:MFL_OU} at stationarity, i.e., when $\E[X_t] = 0$, which would depend only on the sum $\alpha_1 + \gamma_1$ and not on the single parameters alone. Indeed, if we had assumed both $\alpha_1$ and $\gamma_1$ to be unknown, then we would have got $\mu = \mathcal N \left( 0, -\sigma_0/(\alpha_1 + \gamma_1) \right)$ as invariant measure, and the matrix $\mathcal A$ would have been
\begin{equation}
\mathcal A = \begin{pmatrix}
\norm{v}^2 & \norm{v}^2 & - \frac{\alpha_1 + \gamma_1}{\sigma_0} \norm{v}^2 \\
\norm{v}^2 & \norm{v}^2 & - \frac{\alpha_1 + \gamma_1}{\sigma_0} \norm{v}^2 \\
- \frac{\alpha_1 + \gamma_1}{\sigma_0} \norm{v}^2 & - \frac{\alpha_1 + \gamma_1}{\sigma_0} \norm{v}^2 & \left( - \frac{\alpha_1 + \gamma_1}{\sigma_0} \right)^2 \norm{v}^2 + 1
\end{pmatrix}.
\end{equation}
Therefore, we would have obtained $\det(\mathcal A) = 0$, in contrast with the assumption, independently of the number $M$ of moments equations.
\end{remark}

\section{Convergence analysis} \label{sec:analysis}

In this section we study the asymptotic properties of the proposed estimator. In particular, we prove that it is asymptotically unbiased when both the final time and the number of particles tend to infinity, and we provide a rate of convergence. We first need to introduce an additional assumption, which guarantees that the uniform propagation of chaos property holds true for the system of interacting particles and that the variables $X_t^{(n)}$ and $X_t$ have bounded moments of any order for all $t \in [0,T]$ and $n = 1, \dots, N$.
\begin{assumption} \label{as:propagation_chaos}
There exists a constant $C>0$ independent of $T$ and $N$ such that for all $t \in [0,T]$ and $n=1, \dots,N$
\begin{equation}
\left( \E \left[ (X_t^{(n)} - X_t)^2 \right] \right)^{1/2} \le \frac{C}{\sqrt{N}},
\end{equation}
where $(X_t^{(n)})_{t\in[0,T]}$ is one particle of the system \eqref{eq:systemIP} and $(X_t)_{t\in[0,T]}$ is the solution of the corresponding mean field limit \eqref{eq:MFL}, where the Brownian motion $(B_t)_{t\in[0,T]}$ is chosen to be $(B_t^{(n)})_{t\in[0,T]}$.
Moreover, it holds for all $q \ge 1$
\begin{equation}
\left( \E \left[ \abs{X_t^{(n)}}^q \right] \right)^{1/q} \le C \qquad \text{and} \qquad \left( \E \left[ \abs{X_t}^q \right] \right)^{1/q} \le C.
\end{equation}
\end{assumption}
Even if a general theory for McKean SDEs with polynomial coefficients which guarantees \cref{as:analysis,as:propagation_chaos} does not exist, these hypotheses are satisfied in several frameworks in the literature. First, we can consider the setting of \cref{ex:Malrieu}, and consequently the mean field Ornstein--Uhlenbeck process in \cref{sec:OU}, where propagation of chaos and bounded moments are shown in \cite[Theorem 3.3]{Mal01} and \cite[Lemma 2.3.1]{Gan08}. In \cite{Daw83} the special case, where the confining potential is bistable, the interaction potential is quadratic, and the diffusion is constant, is studied in full detail and \cref{as:analysis,as:propagation_chaos} are proven together with the analysis of the phase transition. Still maintaining the diffusion coefficient constant, another interesting class of models, which has been studied in several papers \cite{Mal03,CGM08,LLM21,LMT22,BPP23}, is obtained by setting the drift term equal to zero and considering an odd nondecreasing interaction term. This setting has been thoroughly analyzed in \cite{BRT98,BRV98}. The authors show the existence and uniqueness of the solution of the McKean SDE (\cite[Theorem 3.1]{BRT98}), and the existence and uniqueness of a stationary distribution for the mean field limit (\cite[Theorems 4.1 and 4.7]{BRT98}) with some particular cases explicitly considered in \cite[Propositions 4.13 and 4.14]{BRT98}. Moreover, they study the interacting particle system and the corresponding propagation of chaos result (\cite[Proposition 5.1 and Theorem 5.3]{BRT98}), and finally prove the convergence to the stationary distribution (\cite[Theorems 3.1 and 4.1]{BRV98}). Fewer papers consider the case of nonconstant diffusion term. We mention the work in \cite{Lac18} where the propagation of chaos result is proved under the assumptions that the drift is Lipschitz with respect to the measure, the diffusion coefficient is nondegenerate and the ratio between the drift and the diffusion is bounded. Moreover, a more general setting, where also the diffusion coefficient can potentially depend on the law of the process, has been considered recently in \cite{GHL23}. Here, propagation of chaos is shown assuming different nonglobally Lipschitz conditions for both the drift and the diffusion terms. We would be interested in further investigating existence and uniqueness of solution and invariant measure for McKean SDEs and the corresponding interacting particle system for more general drift and diffusion terms. We believe that this would be possible at least for processes where the drift and the interaction coefficients are gradients of convex functions and the diffusion is bounded below by a positive constant. We think that the starting point for an idea on how to prove these properties consists in restricting the whole space to a ball of radius $R$, so that the theory in \cite[Section 5]{Mal01}, which is developed for bounded and globally Lipschitz functions, can be applied. Then, the extension to the whole space could be obtained by letting $R\to\infty$ and combining a strong dissipativity condition for the confining potential and the theory of Lyapunov functions in \cite{MSH02} together with the localization lemma in \cite[Chapter 3]{PrR07}. However, these last results must first be extended to the case of mean field SDEs. We will return to this problem in future work.
\begin{remark}
We notice that whenever we write $\norm{A}$ and $\norm{A}_{\mathrm{F}}$ for a real matrix $A$ we mean its spectral and Frobenius norm, respectively. Moreover, all the constants will be denoted by $C$ even if their value can change from line to line.
\end{remark}
The next lemma is a technical result which shows the convergence of the approximated moments $\widetilde{\mathbb M}^{(r)}_{T,N}$ towards the exact moments $\mathbb M^{(r)}$ of the invariant measure $\mu$ of the mean field limit, and will be crucial for the proof of the main theorem.
\begin{lemma} \label{lem:convergence_moments}
Let \cref{as:analysis,as:propagation_chaos} hold and let $\nu = \mu$. Then, for all $r \in \N$ and for all $q \in [1,2)$ there exists a constant $C>0$ independent of $T$ and $N$ such that
\begin{equation}
\left( \E \left[ \abs{\widetilde{\mathbb M}_{T,N}^{(r)} - \mathbb M^{(r)}}^q \right] \right)^{1/q} \le C \left( \frac{1}{\sqrt{T}} + \frac{1}{\sqrt{N}} \right).
\end{equation}
\end{lemma}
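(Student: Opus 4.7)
The plan is to split the error by the triangle inequality into a \emph{particle-to-mean-field} error (which will contribute the $N^{-1/2}$ term) and a \emph{mean-field-to-equilibrium} error (which will contribute the $T^{-1/2}$ term), and then deploy \cref{as:propagation_chaos} for the first and a quantitative ergodicity argument for the second. Introducing the time-averaged synchronous coupling $\overline M_T^{(r)} \defeq \frac{1}{T}\int_0^T X_t^r \dd t$, where $(X_t)_{t \in [0,T]}$ solves the mean field SDE \eqref{eq:MFL} driven by the same Brownian motion $B^{(\bar n)}$ as $(X_t^{(\bar n)})_{t \in [0,T]}$ and started from the same initial condition drawn from $\mu$, I would write
\begin{equation*}
\bigl\|\widetilde M_{T,N}^{(r)} - M^{(r)}\bigr\|_{L^q} \le \bigl\|\widetilde M_{T,N}^{(r)} - \overline M_T^{(r)}\bigr\|_{L^q} + \bigl\|\overline M_T^{(r)} - M^{(r)}\bigr\|_{L^q}.
\end{equation*}

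For the first term, I would use the algebraic factorisation $a^r - b^r = (a-b) \sum_{k=0}^{r-1} a^k b^{r-1-k}$ to obtain the pointwise estimate $|(X_t^{(\bar n)})^r - X_t^r| \le r (|X_t^{(\bar n)}|^{r-1} + |X_t|^{r-1}) |X_t^{(\bar n)} - X_t|$. Then Minkowski's inequality in time, followed by Hölder's inequality with conjugate exponents $2$ and $2q/(2-q)$ (this is precisely where the restriction $q<2$ enters), gives
\begin{equation*}
\bigl\|\widetilde M_{T,N}^{(r)} - \overline M_T^{(r)}\bigr\|_{L^q} \le \frac{r}{T}\int_0^T \bigl\| |X_t^{(\bar n)}|^{r-1} + |X_t|^{r-1} \bigr\|_{L^{2q/(2-q)}} \bigl\|X_t^{(\bar n)} - X_t\bigr\|_{L^2} \dd t \le \frac{C}{\sqrt{N}},
\end{equation*}
where the uniform-in-$t$ moment bound in \cref{as:propagation_chaos} controls the first factor and the $L^2$ propagation of chaos bound in the same assumption controls the second.

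For the second term, I would use the stationarity hypothesis $\nu = \mu$: the law of $X_t$ is then $\mu$ for every $t \ge 0$, so $\E[\overline M_T^{(r)}] = M^{(r)}$ and the quantity is centred. Since $q < 2$, by Jensen's inequality it suffices to prove the $L^2$ bound $\Var(\overline M_T^{(r)}) \le C/T$, which I would obtain from
\begin{equation*}
\Var(\overline M_T^{(r)}) = \frac{2}{T^2}\int_0^T\!\!\int_s^T \operatorname{Cov}(X_s^r, X_t^r) \dd t \dd s,
\end{equation*}
combined with an exponential decay of correlations of the form $|\operatorname{Cov}(X_s^r, X_t^r)| \le C e^{-\lambda(t-s)}$. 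The prefactor is controlled by the uniform moments in \cref{as:propagation_chaos}, and at stationarity the mean field SDE reduces to an ordinary SDE with drift $f(\cdot;\alpha) + (g(\cdot;\gamma)*\rho)(\cdot)$ and invariant measure $\mu$, so the exponential decay follows from the spectral gap / Poincaré inequality available in the setting of \cref{ex:Malrieu}.

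The main obstacle I anticipate is precisely this exponential decay of correlations at stationarity: \cref{as:analysis} only postulates ergodicity with a unique invariant measure without any quantitative rate, so some additional mixing input (either a Poincaré-type inequality for the linearised generator at stationarity, or directly the exponential $L^1$ convergence from \cref{ex:Malrieu}) is implicitly needed to secure the $T^{-1/2}$ rate. The restriction $q<2$ is not incidental either: it is exactly what allows one to absorb the $L^2$-only synchronous coupling bound of \cref{as:propagation_chaos} against arbitrarily high moments of $X_t^{(\bar n)}$ and $X_t$ via Hölder, without requiring a propagation of chaos estimate in $L^p$ for $p>2$.
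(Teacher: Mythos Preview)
Your proposal is correct and follows essentially the same route as the paper: the same two-term splitting via the synchronously coupled mean field process, the same algebraic factorisation of $a^r-b^r$ for the particle-to-mean-field piece, and the same use of H\"older with exponents $2$ and $\tfrac{2q}{2-q}$ (which is exactly how the restriction $q<2$ enters in the paper as well).

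The only notable difference is in the treatment of the ergodic term $\bigl\|\overline M_T^{(r)} - M^{(r)}\bigr\|_{L^q}$. You propose to bound it via $\Var(\overline M_T^{(r)})$ and an exponential decay of correlations, correctly flagging that this requires a quantitative mixing input not explicitly contained in \cref{as:analysis}. The paper instead dispatches this step in one line by invoking an external quantitative ergodic theorem (the $L^2$ ergodic bound for stationary diffusions from \cite[Section 4]{MST10}), which of course relies on precisely the kind of spectral-gap input you identified. So your anticipated obstacle is real, and the paper resolves it by citation rather than by argument; your correlation-decay sketch is simply the content of that cited result unpacked.
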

\begin{proof}
By the Minkowsky inequality we first have
\begin{equation} \label{eq:decompositionI}
\begin{aligned}
\left( \E \left[ \abs{\widetilde{\mathbb M}_{T,N}^{(r)} - \mathbb M^{(r)}}^q \right] \right)^{1/q} &= \left( \E \left[ \abs{\frac1T \int_0^T (X_t^{(n)})^r \dd t - \E^{\mu}[X^r]}^q \right] \right)^{1/q} \\
&\le \left( \E \left[ \abs{\frac1T \int_0^T (X_t^{(n)})^r \dd t - \frac1T \int_0^T (X_t)^r \dd t}^q \right] \right)^{1/q} \\
&\quad + \left( \E \left[ \abs{\frac1T \int_0^T (X_t)^r \dd t - \E^{\mu}[X^r]}^q \right] \right)^{1/q} \\
&\eqdef I_1 + I_2,
\end{aligned}
\end{equation}
and then we analyze the two terms in the right-hand side separately. Since the initial distribution $\nu$ is equal to the invariant distribution $\mu$ of the mean field limit, then equation \eqref{eq:MFL} becomes a standard Itô SDE and we can apply the Hölder inequality and the ergodic theorem in \cite[Section 4]{MST10} to obtain
\begin{equation} \label{eq:boundI2}
I_2 \le \left( \E \left[ \abs{\frac1T \int_0^T (X_t)^r \dd t - \E^{\mu}[X^r]}^2 \right] \right)^{1/2} \le \frac{C}{\sqrt{T}}.
\end{equation}
We then consider $I_1$ and applying Hölder's inequality we have
\begin{equation}
I_1^q = \frac{1}{T^q} \E \left[ \abs{\int_0^T \left( (X_t^{(n)})^r - (X_t)^r \right) \dd t}^q \right] \le \frac1T \int_0^T \E \left[ \abs{(X_t^{(n)})^r - (X_t)^r}^q \right] \dd t,
\end{equation}
which implies
\begin{equation} \label{eq:boundI1q}
I_1^q \le \sup_{t\in[0,T]} \E \left[ \abs{(X_t^{(n)})^r - (X_t)^r}^q \right] \eqdef \sup_{t\in[0,T]} E_t.
\end{equation}
Applying Hölder's and Jensen's inequality we get
\begin{equation}
\begin{aligned}
E_t &= \E \left[ \abs{X_t^{(n)} - X_t}^q \abs{(X_t^{(n)})^{r-1} + (X_t^{(n)})^{r-2} X_t + \dots + X_t^{(n)}(X_t)^{r-2} + (X_t)^{r-1}}^q \right] \\
&\le r^{\frac{3q-2}{2}} \left( \E \left[ \abs{X_t^{(n)} - X_t}^2 \right] \right)^{\frac{q}{2}} \\
&\hspace{1cm} \times \left\{ \E \left[ \abs{X_t^{(n)}}^{\frac{2q(r-1)}{2-q}} \right] + \left( \E \left[ \abs{X_t^{(n)}}^{\frac{4q(r-2)}{2-q}} \right] \right)^{1/2} \left( \E \left[ \abs{X_t}^{\frac{4q}{2-q}} \right] \right)^{1/2} + \dots \right. \\
&\hspace{2cm} + \left. \left( \E \left[ \abs{X_t^{(n)}}^{\frac{4q}{2-q}} \right] \right)^{1/2} \left( \E \left[ \abs{X_t}^{\frac{4q(r-2)}{2-q}} \right] \right)^{1/2} + \E \left[ \abs{X_t}^{\frac{2q(r-1)}{2-q}} \right] \right\}^{\frac{2-q}{2}},
\end{aligned}
\end{equation}
and due to the boundedness of the moments of $X_t^{(n)}$ and $X_t$ and the uniform propagation of chaos property in \cref{as:propagation_chaos} we obtain
\begin{equation}
E_t \le C \left( \E \left[ \abs{X_t^{(n)} - X_t}^2 \right] \right)^{q/2} \le C N^{-q/2},
\end{equation}
which together with bound \eqref{eq:boundI1q} yields
\begin{equation} \label{eq:boundI1}
I_1 \le \frac{C}{\sqrt N}.
\end{equation}
Finally, decomposition \eqref{eq:decompositionI} and bounds \eqref{eq:boundI2} and \eqref{eq:boundI1} give the desired result.
\end{proof}
\begin{remark}
The technical assumption in \cref{lem:convergence_moments} that the particles in the system are initially distributed according to the invariant measure of the corresponding mean field limit is not a serious limitation of the validity of the lemma, as long as the system is ergodic and satisfies the uniform propagation of chaos property, which is equivalent to having exponentially fast convergence to equilibrium, both for the $N$-particle system and for the mean field SDE \cite{DGP23,GLW22}. This can also be observed in the numerical experiments presented in the next section, where the choice of the initial condition does not affect the performance of the estimator.
\end{remark}
Let us now focus on the system for the exact unknown \eqref{eq:system_exact_hat} and the system \eqref{eq:linear_system_hat} obtained using the available data, and consider the corresponding least squares linear systems
\begin{equation}
\mathcal A \theta^* = c \qquad \text{and} \qquad \widetilde{\mathcal A}_{T,N} \widetilde \theta_{T,N}^M = \widetilde c_{T,N},
\end{equation}
where $\mathcal A, \widetilde{\mathcal A}_{T,N} \in \R^{P \times P}$ and $c, \widetilde c_{T,N} \in \R^P$ are defined as
\begin{equation}
\mathcal A = \mathcal M^\top \mathcal M, \qquad c = \mathcal M^\top b, \qquad \widetilde{\mathcal A}_{T,N} = \widetilde{\mathcal M}_{T,N}^\top \widetilde{\mathcal M}_{T,N}, \qquad \widetilde c_{T,N} = \widetilde{\mathcal M}_{T,N}^\top \widetilde b_{T,N}.
\end{equation}
We remark that if the matrix $\widetilde{\mathcal A}_{T,N}$ has full rank, then the linear system has a unique solution which coincides with the minimum-norm least squares solution of the system \eqref{eq:linear_system_hat}. In fact, since it holds
\begin{equation}
\widetilde{\mathcal M}_{T,N}^\dagger = (\widetilde{\mathcal M}_{T,N}^\top \widetilde{\mathcal M}_{T,N})^{-1} \widetilde{\mathcal M}_{T,N}^\top,
\end{equation}
then we have
\begin{equation}
\widetilde \theta_{T,N}^M = \widetilde{\mathcal A}_{T,N}^{-1} \widetilde c_{T,N} = \widetilde{\mathcal M}_{T,N}^\dagger \widetilde b_{T,N}.
\end{equation}
Notice also that if we write
\begin{equation} \label{eq:system_perturbation}
\left[ \mathcal A + \left( \widetilde{\mathcal A}_{T,N} - \mathcal A \right)  \right] \left[ \theta^* + \left( \widetilde \theta^M_{T,N} - \theta^* \right) \right] = \left[ c + \left( \widetilde c_{T,N} - c \right) \right],
\end{equation}
then the approximated linear system can be seen as a perturbation of the exact one, and we can employ the a priori forward error stability analysis performed in \cite[Section 3.1.2]{QSS00}. Therefore, in the next lemma we quantify the size of the perturbations.
\begin{lemma} \label{lem:bound_perturbation}
Under the same assumptions of \cref{lem:convergence_moments}, there exists a constant $C>0$ independent of $T$ and $N$ such that
\begin{equation}
\begin{aligned}
(i) & \quad \E \left[ \norm{\widetilde{\mathcal A}_{T,N} - \mathcal A} \right] \le C \left( \frac{1}{\sqrt T} + \frac{1}{\sqrt N} \right), \\
(ii) & \quad \E \left[ \norm{\widetilde c_{T,N} - c} \right] \le C \left( \frac{1}{\sqrt T} + \frac{1}{\sqrt N} \right),
\end{aligned}
\end{equation}
where $\norm{\cdot}$ denotes either the spectral norm for a matrix or the Euclidean norm for a vector.
\end{lemma}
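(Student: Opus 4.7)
The plan is to reduce both estimates to \cref{lem:convergence_moments} via an entry-wise analysis, exploiting the fact that $\mathcal M$ and $b$ depend polynomially on the exact moments while $\widetilde{\mathcal M}_{T,N}, \widetilde b_{T,N}$ are built from the same polynomial expressions with $M^{(r)}$ replaced by $\widetilde M_{T,N}^{(r)}$. Inspecting \eqref{eq:equation_moments} and \eqref{eq:equationQV_limit}, every entry of $\mathcal M$ is either linear in the moments (columns corresponding to an unknown $\alpha_j$ or $\sigma_\ell$) or quadratic in the moments (columns corresponding to an unknown $\gamma_k$, since the convolution produces a product of two moments), and every entry of $b$ is a polynomial of degree at most two in the moments. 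Since $\mathcal M, \widetilde{\mathcal M}_{T,N} \in \R^{(M+1)\times P}$ and $b, \widetilde b_{T,N} \in \R^{M+1}$ have fixed dimensions depending only on $M$ and $P$, the spectral norm, the Frobenius norm and the entry-wise maxima are all equivalent up to a universal constant, so it suffices to bound each entry of $\widetilde{\mathcal M}_{T,N}-\mathcal M$ and $\widetilde b_{T,N}-b$ separately.

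For linear terms the bound is provided directly by \cref{lem:convergence_moments}. For a quadratic term I would use the telescoping identity
\[
\widetilde M_{T,N}^{(r)}\widetilde M_{T,N}^{(s)} - M^{(r)}M^{(s)} = \bigl(\widetilde M_{T,N}^{(r)} - M^{(r)}\bigr)\widetilde M_{T,N}^{(s)} + M^{(r)}\bigl(\widetilde M_{T,N}^{(s)} - M^{(s)}\bigr),
\]
note that $M^{(r)}$ is a deterministic constant bounded under \cref{as:propagation_chaos}, and observe that by Jensen's inequality and \cref{as:propagation_chaos},
\[
\E\bigl[|\widetilde M_{T,N}^{(s)}|^p\bigr] \le \frac{1}{T}\int_0^T \E\bigl[|X_t^{(\bar n)}|^{sp}\bigr]\dd t \le C_{s,p}
\]
for every $p \ge 1$. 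Choosing $q \in (1,2)$ and its conjugate $p = q/(q-1) > 2$, Hölder's inequality applied to the first summand above combined with \cref{lem:convergence_moments} yields
\[
\E\bigl[\bigl|(\widetilde M_{T,N}^{(r)}-M^{(r)})\widetilde M_{T,N}^{(s)}\bigr|^q\bigr]^{1/q} \le C\bigl(T^{-1/2}+N^{-1/2}\bigr).
\]
Collecting entries I obtain $\E[\|\widetilde{\mathcal M}_{T,N}-\mathcal M\|^q]^{1/q} \le C(T^{-1/2}+N^{-1/2})$ and the analogous $L^q$-bound for $\widetilde b_{T,N}-b$; the additional row of $\widetilde b_{T,N}-b$ arising from \eqref{eq:equationQV} and \eqref{eq:equationQV_limit} is itself a linear combination of moment errors and is treated in the same fashion.

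For the final passage to $\widetilde{\mathcal A}_{T,N}-\mathcal A$ and $\widetilde c_{T,N}-c$ I would use the splittings
\[
\widetilde{\mathcal A}_{T,N}-\mathcal A = \widetilde{\mathcal M}_{T,N}^\top(\widetilde{\mathcal M}_{T,N}-\mathcal M) + (\widetilde{\mathcal M}_{T,N}-\mathcal M)^\top \mathcal M,
\]
\[
\widetilde c_{T,N}-c = \widetilde{\mathcal M}_{T,N}^\top(\widetilde b_{T,N}-b) + (\widetilde{\mathcal M}_{T,N}-\mathcal M)^\top b,
\]
submultiplicativity of the spectral norm and a last application of Hölder's inequality with the same $q, p$. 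The random factor $\E[\|\widetilde{\mathcal M}_{T,N}\|^p]^{1/p}$ is uniformly bounded in $T, N$ by the same Jensen argument applied to the polynomial entries of $\widetilde{\mathcal M}_{T,N}$, while $\|\mathcal M\|$ and $\|b\|$ are deterministic constants. The main technical subtlety, and indeed the only delicate point, is that \cref{lem:convergence_moments} only supplies $L^q$-bounds for $q$ strictly less than $2$; this prevents a direct Cauchy–Schwarz estimate and forces the Hölder splitting with a large conjugate exponent on the ``data'' factor $\widetilde{\mathcal M}_{T,N}$, which is affordable precisely because \cref{as:propagation_chaos} furnishes moments of every order for $X_t^{(\bar n)}$.
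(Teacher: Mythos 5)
Your proposal is correct and follows essentially the same route as the paper: both reduce the matrix and vector perturbations to entry-wise moment errors controlled by \cref{lem:convergence_moments}, handle the bilinear structure via the splittings $\widetilde{\mathcal M}_{T,N}^\top\widetilde{\mathcal M}_{T,N}-\mathcal M^\top\mathcal M$ and $\widetilde{\mathcal M}_{T,N}^\top\widetilde b_{T,N}-\mathcal M^\top b$, and apply H\"older with an exponent strictly below $2$ on the error factor and a high exponent on the data factor (the paper fixes $3$ and $3/2$, you keep a generic conjugate pair), the latter being uniformly bounded thanks to the all-order moment bounds in \cref{as:propagation_chaos}. Your explicit telescoping of products of moments and the treatment of the quadratic-variation entry of $\widetilde b_{T,N}-b$ match the paper's (more tersely stated) argument.
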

\begin{proof}
By the triangle inequality we first have
\begin{equation}
\norm{\widetilde{\mathcal A}_{T,N} - \mathcal A} = \norm{\widetilde{\mathcal M}_{T,N}^\top \widetilde{\mathcal M}_{T,N} - \mathcal M^\top \mathcal M} \le \left( \norm{\widetilde{\mathcal M}_{T,N}}_{\mathrm F} + \norm{\mathcal M}_{\mathrm F} \right) \norm{\widetilde{\mathcal M}_{T,N} - \mathcal M}_{\mathrm F},
\end{equation}
where we also used the fact that $\norm{A} = \norm{A^\top}$ and $\norm{A} \le \norm{A}_{\mathrm F}$ for any matrix $A$, where $\norm{\cdot}_{\mathrm F}$ denotes the Frobenius norm. Then, by Hölder's inequality with exponents $3$ and $3/2$ and by Jensen's inequality we have
\begin{equation} \label{eq:boundAdiff}
\E \left[ \norm{\widetilde{\mathcal A}_{T,N} - \mathcal A} \right] \le \sqrt[3]{4} \left( \E \left[ \norm{\widetilde{\mathcal M}_{T,N}}_{\mathrm F}^3 \right] + \norm{\mathcal M}_{\mathrm F}^3 \right)^{1/3} \left( \E \left[ \norm{\widetilde{\mathcal M}_{T,N} - \mathcal M}_{\mathrm F}^{3/2} \right] \right)^{2/3},
\end{equation}
and it now remains to bound the two expectations in the right-hand side. Due to Jensen's inequality we have
\begin{equation}
\E \left[ \norm{\widetilde{\mathcal M}_{T,N}}_{\mathrm F}^3 \right] = \E \left[ \left( \sum_{i=1}^{M+1} \sum_{j=1}^P \abs{(\widetilde{\mathcal M}_{T,N})_{ij}}^2 \right)^{3/2} \right] \le \sqrt{P(M+1)} \sum_{i=1}^{M+1} \sum_{j=1}^P \E \left[ \abs{(\widetilde{\mathcal M}_{T,N})_{ij}}^3 \right],
\end{equation}
and by using that $(\sum_{i=1}^n x_i)^p \le \sum_{i=1}^n x_i^p$ for $0<p<1$ we get
\begin{equation}
\E \left[ \norm{\widetilde{\mathcal M}_{T,N} - \mathcal M}_{\mathrm F}^{3/2} \right] \le \sum_{i=1}^{M+1} \sum_{j=1}^P \E \left[ \abs{(\widetilde{\mathcal M}_{T,N})_{ij} - (\mathcal M)_{ij}}^{3/2} \right].
\end{equation}
We now notice that the components of the matrices $\widetilde{\mathcal M}_{T,N}$ and $\mathcal M$ consist of sum and products of the moments $\mathbb M^{(r)}$ and their approximations $\widetilde{\mathbb M}^{(r)}_{T,N}$, and therefore due to the boundedness of the moments and by \cref{lem:convergence_moments} we obtain
\begin{equation} \label{eq:boundsMdiffM}
\E \left[ \norm{\widetilde{\mathcal M}_{T,N}}_{\mathrm F}^3 \right] \le C \qquad \text{and} \qquad \E \left[ \norm{\widetilde{\mathcal M}_{T,N} - \mathcal M}_{\mathrm F}^{3/2} \right] \le C \left( \frac{1}{\sqrt T} + \frac{1}{\sqrt N} \right)^{3/2},
\end{equation}
which together with equation \eqref{eq:boundAdiff} implies point $(i)$. For point $(ii)$ we proceed analogously and by triangle inequality we have
\begin{equation}
\norm{\widetilde c_{T,N} - c} = \norm{\widetilde{\mathcal M}_{T,N}^\top \widetilde b_{T,N} - \mathcal M^\top b} \le \norm{\widetilde{\mathcal M}_{T,N}}_{\mathrm F} \norm{\widetilde b_{T,N} - b} + \norm{b} \norm{\widetilde{\mathcal M}_{T,N} - \mathcal M}_{\mathrm F}.
\end{equation}
Then, by Hölder's inequality with exponents $3$ and $3/2$ we have
\begin{equation} \label{eq:boundCdiff}
\begin{aligned}
\E \left[ \norm{\widetilde c_{T,N} - c} \right] &\le \left( \E \left[ \norm{\widetilde{\mathcal M}_{T,N}}_{\mathrm F}^3 \right] \right)^{1/3} \left( \E \left[ \norm{\widetilde b_{T,N} - b}^{3/2} \right] \right)^{2/3} \\
&\quad + \norm{b} \left( \E \left[ \norm{\widetilde{\mathcal M}_{T,N} - \mathcal M}_{\mathrm F}^{3/2} \right] \right)^{2/3},
\end{aligned}
\end{equation}
where the first and last terms are bounded by \eqref{eq:boundsMdiffM}. For the remaining term we have
\begin{equation} \label{eq:boundBdiff}
\E \left[ \norm{\widetilde b_{T,N} - b}^{3/2} \right] \le \sum_{i=1}^{M+1} \E \left[ \abs{(\widetilde b_{T,N})_i - (b)_i}^{3/2} \right] \le C \left( \frac{1}{\sqrt T} + \frac{1}{\sqrt N} \right)^{3/2},
\end{equation}
where we used \cref{lem:convergence_moments} and the fact that the components of the vectors $\widetilde b_{T,N}$ and $b$ consist of sum and products of the moments $\mathbb M^{(r)}$ and their approximations $\widetilde{\mathbb M}^{(r)}_{T,N}$. We remark that the vectors $\widetilde b_{T,N}$ and $b$ depend also on the quantities $Q_{T,N}$ and $Q$ given in \eqref{eq:equationQV} and \eqref{eq:equationQV_limit}, which in turn depend on the approximated and exact moments, respectively, and therefore \cref{lem:convergence_moments} can still be employed. Finally, equation \eqref{eq:boundCdiff} together with \eqref{eq:boundsMdiffM} and \eqref{eq:boundBdiff} yields point $(ii)$ and concludes the proof.
\end{proof}
We are now ready to state and prove the main result of this section, i.e., the asymptotic unbiasedness of our estimator $\widehat \theta_{T,N}^M$ and its rate of of convergence with respect to the final time $T$ and the number of interacting particles $N$ of the system.
\begin{theorem} \label{thm:main}
Let the assumptions of \cref{lem:convergence_moments} hold and let $\det(\mathcal A) \neq 0$. Then, there exists a constant $C>0$ independent of $T$ and $N$ such that
\begin{equation}
\E \left[ \norm{\widehat \theta_{T,N}^M - \theta^*} \right] \le C \left( \frac{1}{\sqrt T} + \frac{1}{\sqrt N} \right),
\end{equation}
and therefore 
\begin{equation}
\lim_{T\to\infty,N\to\infty} \widehat \theta_{T,N}^M = \theta^*, \qquad \text{in } L^1.
\end{equation}
\end{theorem}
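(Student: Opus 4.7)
I would view the noisy linear system \eqref{eq:linear_system_hat} as a random perturbation of the exact system \eqref{eq:system_exact_hat} and combine the standard forward-error stability estimate for invertible linear systems with the perturbation bounds already established in \cref{lem:bound_perturbation}.

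The starting point is the algebraic identity
\begin{equation*}
\widehat\theta^M_{T,N}-\theta^* = \widetilde{\mathcal A}_{T,N}^{-1}\bigl[(\widetilde c_{T,N}-c)-(\widetilde{\mathcal A}_{T,N}-\mathcal A)\,\theta^*\bigr],
\end{equation*}
obtained by subtracting $\mathcal A\theta^*=c$ inside the perturbed system \eqref{eq:system_perturbation} and inverting $\widetilde{\mathcal A}_{T,N}$. Since $\det(\mathcal A)\neq 0$, the task reduces to controlling $\|\widetilde{\mathcal A}_{T,N}^{-1}\|$ jointly with the perturbation sizes $\|\widetilde{\mathcal A}_{T,N}-\mathcal A\|$ and $\|\widetilde c_{T,N}-c\|$, the latter two of which are already handled by \cref{lem:bound_perturbation}.

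To control the inverse I would split on the good event
\begin{equation*}
G_{T,N}\defeq\Bigl\{\|\widetilde{\mathcal A}_{T,N}-\mathcal A\|\le \tfrac{1}{2\|\mathcal A^{-1}\|}\Bigr\}.
\end{equation*}
On $G_{T,N}$ a Neumann-series argument (equivalently, the forward-error bound in \cite[Section~3.1.2]{QSS00}) yields $\|\widetilde{\mathcal A}_{T,N}^{-1}\|\le 2\|\mathcal A^{-1}\|$, and plugging this into the displayed identity, taking expectations, invoking \cref{lem:bound_perturbation}, and using compactness of $\Theta$ to bound $\|\theta^*\|$ delivers the $O(1/\sqrt T+1/\sqrt N)$ rate restricted to $G_{T,N}$. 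On the complement, Markov's inequality combined with \cref{lem:bound_perturbation}(i) gives $\mathbb P(G_{T,N}^c)\le C(1/\sqrt T+1/\sqrt N)$, so the contribution from $G_{T,N}^c$ is small in probability; to convert this into an $L^1$ bound I would combine the probability estimate via Hölder's inequality with a uniform higher-moment bound on $\|\widehat\theta^M_{T,N}\|$, which follows from the $L^q$ control of the empirical moments $\widetilde M^{(r)}_{T,N}$ granted by \cref{as:propagation_chaos} (exactly in the spirit of the proof of \cref{lem:convergence_moments}). Adding the two contributions yields the stated bound, and $L^1$ convergence is then immediate.

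The main obstacle is the bad-event analysis: outside $G_{T,N}$ the matrix $\widetilde{\mathcal A}_{T,N}$ may be nearly singular, and since the least-squares estimator $\widehat\theta^M_{T,N}$ is not forced to lie in the compact set $\Theta$, no deterministic bound on $\|\widehat\theta^M_{T,N}\|$ is available. This must be handled either by projecting the estimator onto $\Theta$ before taking expectations, or by sharpening \cref{lem:convergence_moments} to an arbitrarily large order $q$, so that Hölder can absorb the potential blowup of $\|\widetilde{\mathcal A}_{T,N}^{-1}\|$ against the small probability of $G_{T,N}^c$. All remaining steps are a routine assembly of the already-established lemmas with the classical perturbation inequality for invertible linear systems.
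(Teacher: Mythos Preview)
Your approach is essentially identical to the paper's: the same good event $G_{T,N}$, the same Markov bound on $\mathbb P(G_{T,N}^c)$ via \cref{lem:bound_perturbation}(i), and the same forward-error stability estimate from \cite[Section~3.1.2]{QSS00} on the good event. The paper resolves the bad-event obstacle you flag by your first option: it simply uses that $\widehat\theta^M_{T,N},\theta^*\in\Theta$ with $\Theta$ compact (cf.\ \cref{as:analysis}), so $\|\widehat\theta^M_{T,N}-\theta^*\|$ is deterministically bounded on $G_{T,N}^c$ and the contribution is $O(\mathbb P(G_{T,N}^c))$; no higher-moment sharpening of \cref{lem:convergence_moments} is needed.
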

\begin{proof}
Let us first define the event $B$ as
\begin{equation}
B = \left\{ \norm{\mathcal A^{-1}} \norm{\widetilde{\mathcal A}_{T,N} - \mathcal A} < \frac12 \right\},
\end{equation}
and notice that by Markov's inequality and \cref{lem:bound_perturbation} we have
\begin{equation} \label{eq:boundPAC}
\Pr(B^\compl) \le 2 \norm{\mathcal A^{-1}} \E \left[ \norm{\widetilde{\mathcal A}_{T,N} - \mathcal A} \right] \le C \norm{\mathcal A^{-1}} \left( \frac{1}{\sqrt T} + \frac{1}{\sqrt N} \right).
\end{equation}
Then, by the law of total expectation we obtain
\begin{equation} 
\E \left[ \norm{\widehat \theta_{T,N}^M - \theta^*} \right] = \E \left[ \left. \norm{\widehat \theta_{T,N}^M - \theta^*} \right| B \right] \Pr(B) + \E \left[ \left. \norm{\widehat \theta_{T,N}^M - \theta^*} \right| B^\compl \right] \Pr(B^\compl),
\end{equation}
which since $\widehat \theta_{T,N}^M, \theta^* \in \Theta$, a compact set, and due to estimate \eqref{eq:boundPAC} implies
\begin{equation} \label{eq:decompositionEtheta}
\E \left[ \norm{\widehat \theta_{T,N}^M - \theta^*} \right] \le \E \left[ \left. \norm{\widehat \theta_{T,N}^M - \theta^*} \right| B \right] + C \norm{\mathcal A^{-1}} \left( \frac{1}{\sqrt T} + \frac{1}{\sqrt N} \right).
\end{equation}
It now remains to study the first term in the right-hand side. Since $\widehat \theta^M_{T,N}$ is the projection of $\widetilde \theta^M_{T,N}$ onto the convex set $\Theta$, we have
\begin{equation}
\norm{\widehat \theta_{T,N}^M - \theta^*} \le \norm{\widetilde \theta_{T,N}^M - \theta^*},
\end{equation}
which yields
\begin{equation}
\E \left[ \left. \norm{\widehat \theta_{T,N}^M - \theta^*} \right| B \right] \le \E \left[ \left. \norm{\widetilde \theta_{T,N}^M - \theta^*} \right| B \right].
\end{equation}
Then, by \cite[Theorem 3.1]{QSS00} we have
\begin{equation}
\begin{aligned}
\E \left[ \left. \norm{\widehat \theta_{T,N}^M - \theta^*} \right| B \right] &\le \E \left[ \left. \frac{\mathcal K(\mathcal A) \norm{\theta^*}}{1 - \norm{\mathcal A^{-1}} \norm{\widetilde{\mathcal A}_{T,N} - \mathcal A}} \left( \frac{\norm{\widetilde c_{T,N} - c}}{\norm{c}} + \frac{\norm{\widetilde{\mathcal A}_{T,N} - \mathcal A}}{\norm{\mathcal A}} \right) \right| B \right] \\
&\le 2 \mathcal K(\mathcal A) \norm{\theta^*} \left( \frac{\E \left[ \left. \norm{\widetilde c_{T,N} - c} \right| B \right]}{\norm{c}} + \frac{\E \left[ \left. \norm{\widetilde{\mathcal A}_{T,N} - \mathcal A} \right| B \right]}{\norm{\mathcal A}} \right),
\end{aligned}
\end{equation}
where $\mathcal K(\mathcal A)$ denotes the condition number of the matrix $\mathcal A$ defined as $\mathcal K(\mathcal A) = \norm{\mathcal A} \norm{\mathcal A^{-1}}$. Then, employing the inequality $\E[Y|B] \le \E[Y]/\Pr(B)$, which holds for any positive random variable $Y$, and estimate \eqref{eq:boundPAC} we obtain
\begin{equation}
\E \left[ \left. \norm{\widehat \theta_{T,N}^M - \theta^*} \right| B \right] \le C \mathcal K(\mathcal A) \left( \E \left[ \norm{\widetilde c_{T,N} - c} \right] + \E \left[ \norm{\widetilde{\mathcal A}_{T,N} - \mathcal A} \right] \right),
\end{equation}
which due to \cref{lem:bound_perturbation} implies 
\begin{equation} \label{eq:boundEtheta}
\E \left[ \left. \norm{\widehat \theta_{T,N}^M - \theta^*} \right| B \right] \le C \mathcal K(\mathcal A) \left( \frac{1}{\sqrt T} + \frac{1}{\sqrt N} \right).
\end{equation}
Finally, the desired results follow from equations \eqref{eq:decompositionEtheta} and \eqref{eq:boundEtheta}.
\end{proof}

\begin{remark} \label{rem:condition}
From the proof of \cref{thm:main} we notice that the precision of our estimator is affected by the condition number of the matrix $\mathcal A$. Moreover, we remark that the inverse of $\widetilde{\mathcal A}_{T,N}$ is never required nor written in the proof of \cref{thm:main}, but it is hidden in the proof of \cite[Theorem 3.1]{QSS00}, where it can be bounded under the condition specified by the set $B$.
\end{remark}

The assumption that the matrix $\mathcal A$ is nonsingular is fundamental for the solvability of the inference problem. In particular, if the matrix $\mathcal A$ is noninvertible, then it means that the parameters cannot be uniquely identified through the method of moments, and thus our approach cannot be applied in this context. Our methodology, indeed, strongly relies on the McKean--Vlasov SDE and therefore it is only possible to estimate the parameters which are also present in the mean field dynamics. A particular example about this fact is given in \cref{rem:OU_identifiable}. However, this assumption is not verifiable for most of the problems since the exact moments with respect to the invariant distributions of the mean field dynamics are usually unknown. In order to check in practice whether the assumption $\det(\mathcal A) \neq 0$ is likely to be satisfied, we suggest to compute the condition number of the matrix $\widetilde A_{T,N}$, which is defined as $\mathcal K(\widetilde A_{T,N}) = \norm{\widetilde A_{T,N}} \norm{\widetilde A_{T,N}^\dagger}$, to verify if the linear system is well-conditioned. If this is not the case, then it is likely that not all the parameters can be identified, at least through the method of moments. Our assumption about the nonsingularity of the matrix $\mathcal A$ seems to be related to the invertibility of the normalized Fisher information matrix in \cite{DeH23}, at least in presence of constant diffusion. Under this condition, the authors provide a detailed analysis about identifiability of the parameters through the maximum likelihood estimator, in particular when the log-likelihood function is quadratic in the parameters \cite[Proposition 16]{DeH23}. However, differently from us, they assume to observe all the particles in a finite time horizon, and not one particle in the long time limit, so they do not need to consider the McKean--Vlasov SDE at stationarity. We also mention the work in \cite{AHP23}, where inference for MckKean--Vlasov SDEs where both the drift and the diffusion coefficients can depend on the law of the process is studied. The authors assume to know discrete observations of the particles in the interacting system and propose a minimal contrast estimator, which is asymptotically unbiased and normal. In order to ensure the identifiability of the parameters, they impose a different condition on some particular quantities which cannot be computed explicitly. Finding more straightforward conditions, which can be explicitly verified a priori, to guarantee the identifiability of the parameters through the method of moments is indeed a difficult, but interesting problem, which we would like to explore in future work.

\section{Numerical experiments} \label{sec:experiments}

In this section we present several numerical experiments to corroborate our theoretical results and show the performance of our estimator is inferring unknown parameters in interacting particle systems with polynomial drift, interaction and diffusion functions. We first perform a sensitivity analysis with respect to the number of moments equations considered, we verify the rate of convergence predicted by \cref{thm:main}, and we compare our methodology with different approaches in the literature. Then, we test our estimator with different examples which may also not fit into the theory, and we finally employ our approach for a system of interacting particles in two dimensions. We generate synthetic data employing the Euler--Maruyama method with a fine time step $h = 0.005$ to compute numerically the solution $\{ (X_t^{(n)})_{t\in[0,T]} \}_{n=1}^N$ of the system of SDEs \eqref{eq:systemIP}. We remark that we always set $X_0^{(n)} = 0$ for all $n = 1, \dots, N$ as initial condition. We then randomly select an index $\bar n \in \{ 1, \dots, N \}$ and we suppose to observe only the sample path $(X_t^{(\bar n)})_{t\in[0,T]}$ of the $\bar n$-th particle of the system, from which we construct the linear system whose solution is the proposed estimator $\widehat \theta_{T,N}^M$.

\subsection{Sensitivity analysis}

\begin{figure}
\centering
\includegraphics[]{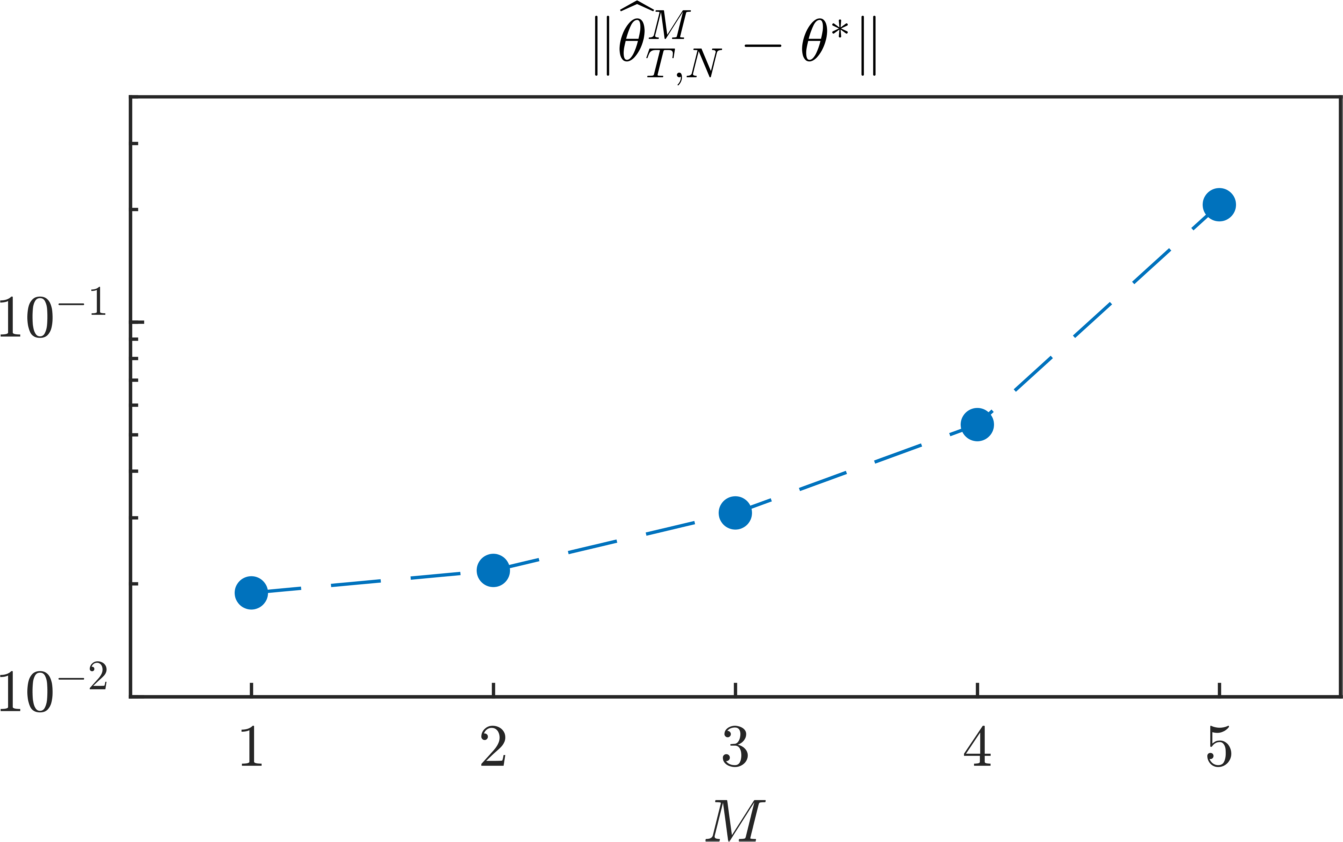} $\,$
\includegraphics[]{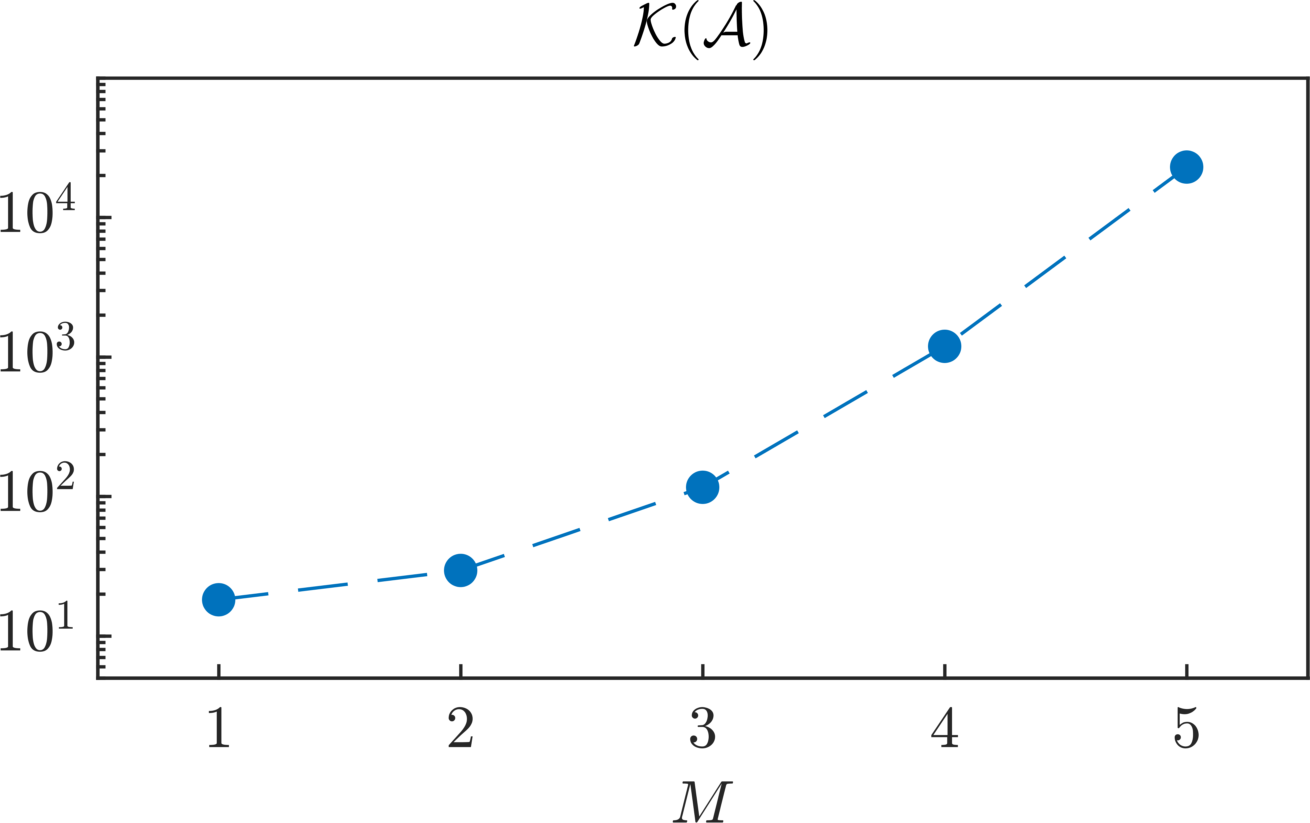}
\caption{Sensitivity analysis for the mean field Ornstein--Uhlenbeck process with respect to the number $M$ of moments equations. Left: error of the estimator $\widehat \theta_{T,N}^M$. Right: condition number of the matrix $\mathcal A$.}
\label{fig:sensitivity_analysis}
\end{figure}

We consider the setting of \cref{sec:OU}, i.e., the mean field Ornstein--Uhlenbeck process, choosing as exact unknown parameters $\alpha_1^* = -1$ and $\sigma_0^* = 1$, and we then aim to estimate $\theta^* = \begin{pmatrix} \alpha_1^* & \sigma_0^* \end{pmatrix}^\top$. Hence, \cref{as:analysis} is satisfied with unique invariant measure of the McKean--Vlasov SDE given by $\mu = \mathcal N(0, 1/2)$, and \cref{as:propagation_chaos} holds true due to \cite[Theorem 3.3]{Mal01} and \cite[Lemma 2.3.1]{Gan08}. In \cref{fig:sensitivity_analysis} we perform a sensitivity analysis with respect to the number $M$ of moments equations employed in the construction of our estimator. We compute $\widehat \theta_{T,N}^M$ for all the particles in the system and for different values of $M = 1,2\dots,5$, fixing the final time $T = 10^4$ and the number of particles $N = 250$. As outlined in \cref{sec:OU}, the linear system which has to be solved is $\widetilde{\mathcal M}_{T,N} \widehat \theta_{T,N}^M = \widetilde b_{T,N}$ where
\begin{equation}
\widetilde b_{T,N} = \begin{pmatrix}
\widetilde{\mathbb M}_{T,N}^{(2)} & \widetilde{\mathbb M}_{T,N}^{(4)} & \cdots & \widetilde{\mathbb M}_{T,N}^{(2M)} & \widetilde Q_{T,N}
\end{pmatrix}^\top,
\end{equation}
and
\begin{equation}
\widetilde{\mathcal M}_{T,N} = \begin{pmatrix}
\widetilde{\mathbb M}_{T,N}^{(2)} & \widetilde{\mathbb M}_{T,N}^{(4)} & \cdots & \widetilde{\mathbb M}_{T,N}^{(2M)} & 0 \\
1 & 3 \widetilde{\mathbb M}_{T,N}^{(2)} & \cdots & (2M-1) \widetilde{\mathbb M}_{T,N}^{(2M-2)} & 1
\end{pmatrix}^\top.
\end{equation}
Moreover, the corresponding exact vector $b$ and matrix $\mathcal M$ for the true values $\alpha_1^*$ and $\sigma_0^*$ are given by
\begin{equation}
b = \begin{pmatrix}
\frac12 & \frac34 & \cdots & \left( \frac12 \right)^M (2M-1)!! & 1
\end{pmatrix}^\top \eqdef \begin{pmatrix}
v^\top & 1
\end{pmatrix}^\top,
\end{equation}
and
\begin{equation}
\mathcal M = \begin{pmatrix}
\frac12 & \frac34 & \cdots & \left( \frac12 \right)^M (2M-1)!! & 0 \\
1 & \frac32 & \cdots & \left( \frac12 \right)^{M-1} (2M-1)!! & 1
\end{pmatrix}^\top \eqdef
\begin{pmatrix}
v^\top & 0 \\
2 v^\top & 1
\end{pmatrix}^\top.
\end{equation}
Therefore, we have
\begin{equation}
\mathcal A = \begin{pmatrix}
\norm{v}^2 & 2 \norm{v}^2 \\
2 \norm{v}^2 & 4 \norm{v}^2 + 1
\end{pmatrix}^\top,
\end{equation}
which implies $\det(\mathcal A) = \norm{v}^2 \neq 0$ as $v \neq 0$, which is the additional assumption in \cref{thm:main}. At the left of \cref{fig:sensitivity_analysis} we then plot the average error and we observe that the inference worsens as the number of moments equations increases. This may sound counterintuitive as one would expect an improvement of the estimation when the number of constraints is higher. However, as highlighted in \cref{rem:condition}, the precision of our estimator is dependent on the condition number of the matrix $\mathcal A$ in the linear system, whose average is plotted at the right of \cref{fig:sensitivity_analysis}. We notice indeed that the condition number increases with $M$ and, in particular, it has the same behavior of the estimation error. Therefore, we believe that the number of constraints should not be much bigger than the minimum number of equations which makes the linear system not underdetermined.

\subsection{Rate of convergence}

\begin{figure}
\centering
\includegraphics[]{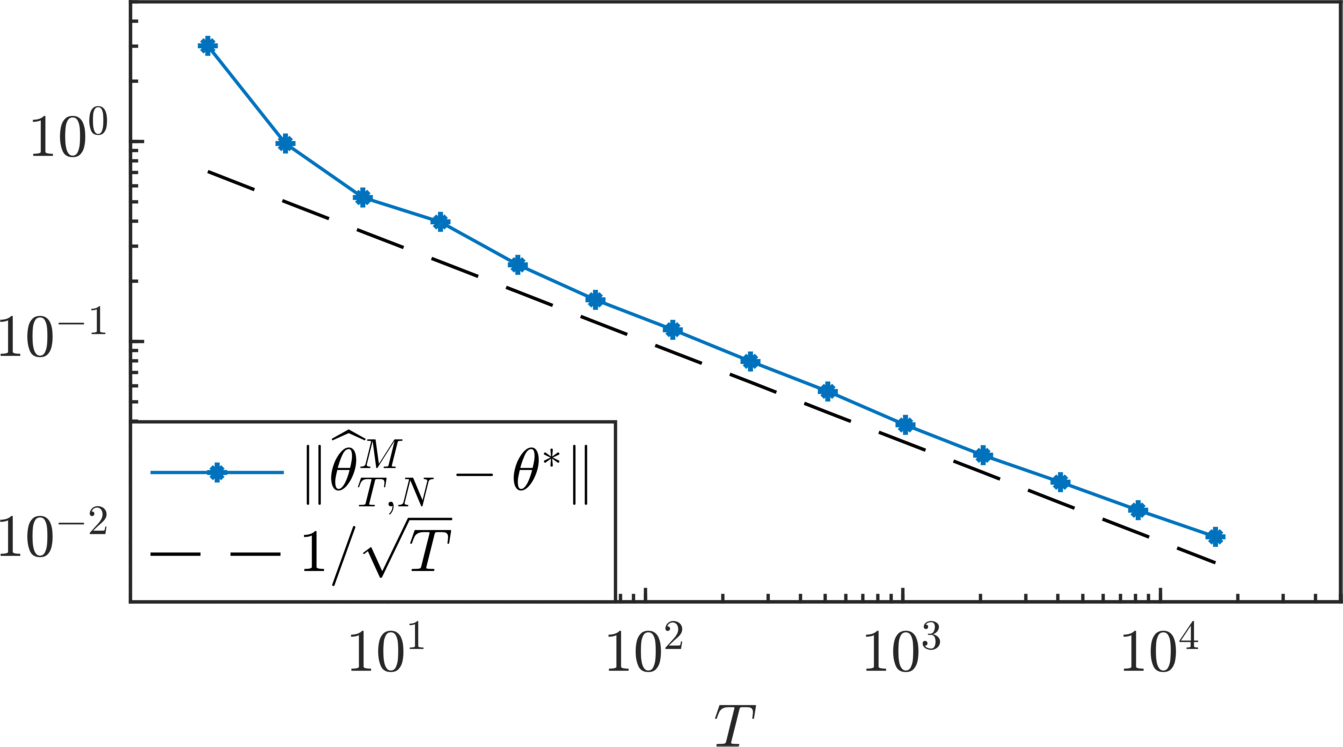} $\,$
\includegraphics[]{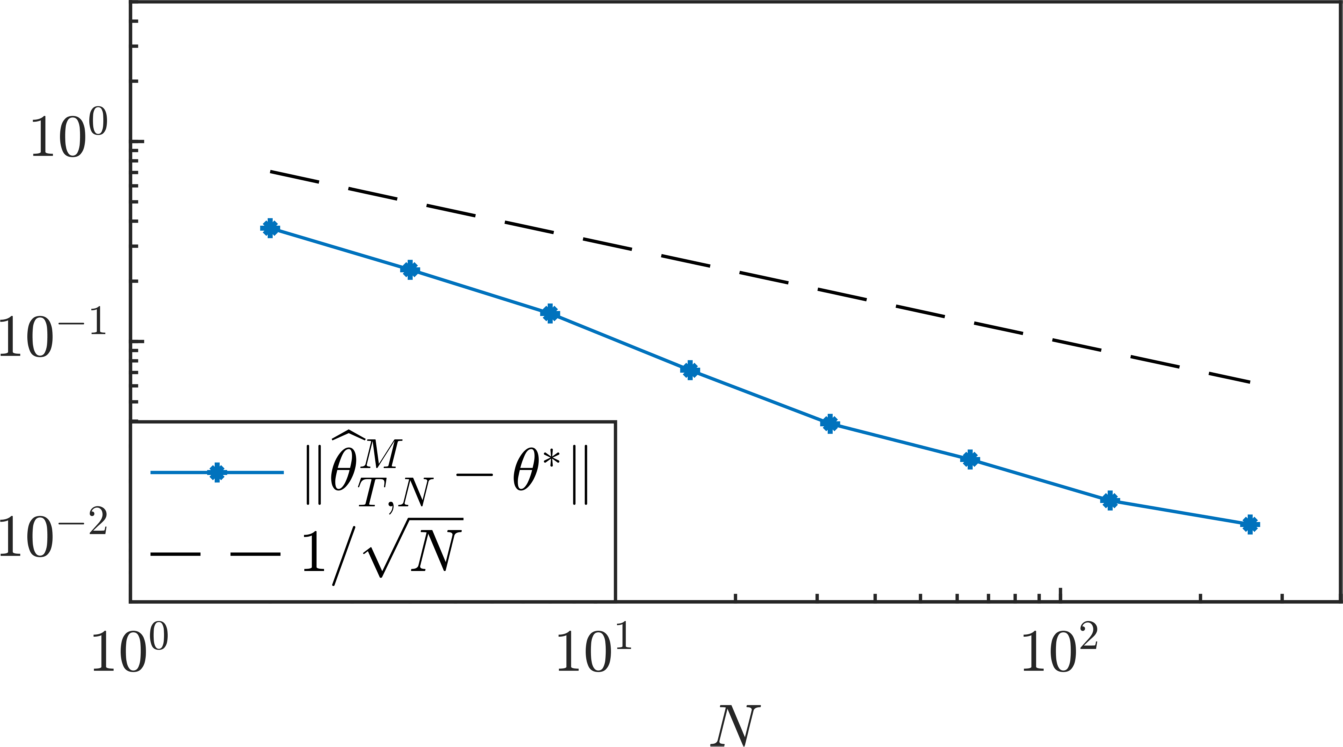}
\caption{Rate of convergence of the estimator $\widehat \theta_{T,N}^M$ towards the exact value $\theta^*$ with respect to the final time $T$ (left) and the number of interacting particles $N$ (right).}
\label{fig:rate_convergence}
\end{figure}

In this section we wish to validate numerically the theoretical result presented in \cref{thm:main}, i.e., the rate of convergence of the estimator with respect to the final time $T$ and the number of particles $N$. We consider the framework of \cref{ex:Malrieu} and we set $V(x;\alpha) = - \alpha_3 x^4 / 4$ and $W(x;\gamma) = x^2/2$, so that $f(x;\alpha) = \alpha_3 x^3$ and $g(x;\gamma) = -x$. \cref{as:analysis,as:propagation_chaos} are guaranteed by \cite{Mal01}. Moreover, we choose $\alpha_3^* = -1$ and $\sigma_0^* = 1$ and we aim to estimate the exact unknown parameter $\theta^* = \begin{pmatrix}
\alpha_3^* & \sigma_0^* \end{pmatrix}^\top$. The numerical results are illustrated in \cref{fig:rate_convergence}, where we use $M = 2$ moments equations with the even moments
\begin{equation}
\begin{aligned}
\widetilde{\mathbb M}_{T,N}^{(4)} \alpha_3 + \sigma_0 &= \widetilde{\mathbb M}_{T,N}^{(2)}, \\
\widetilde{\mathbb M}_{T,N}^{(6)} \alpha_3 + 3 \widetilde{\mathbb M}_{T,N}^{(2)} \sigma_0 &= \widetilde{\mathbb M}_{T,N}^{(4)}, \\
\sigma &= \widetilde Q_{T,N}.
\end{aligned}
\end{equation}
At the left, we fix the number of particles $N = 250$ and we vary the final time $T = 2^i$ with $i = 1,2,\dots,14$. We then plot the average error computed for all the particles in the system as a function of the final time. At the right of the same figure, we fix $T = 10^4$ and we vary $N = 2^i$ with $i = 1,2,\dots,8$. In order to have a fair comparison, we have to compute the average error using the same number of samples, therefore we compute the estimator only for the first particle in the system and we repeat the same procedure $10$ times, plotting then the error as a function of the number of interacting particles. In both cases we observe that the predicted rate of convergence is respected.

\subsection{Comparison with other estimators in case of discrete observations} \label{sec:comparison_OU}

\begin{figure}
\centering
\includegraphics[]{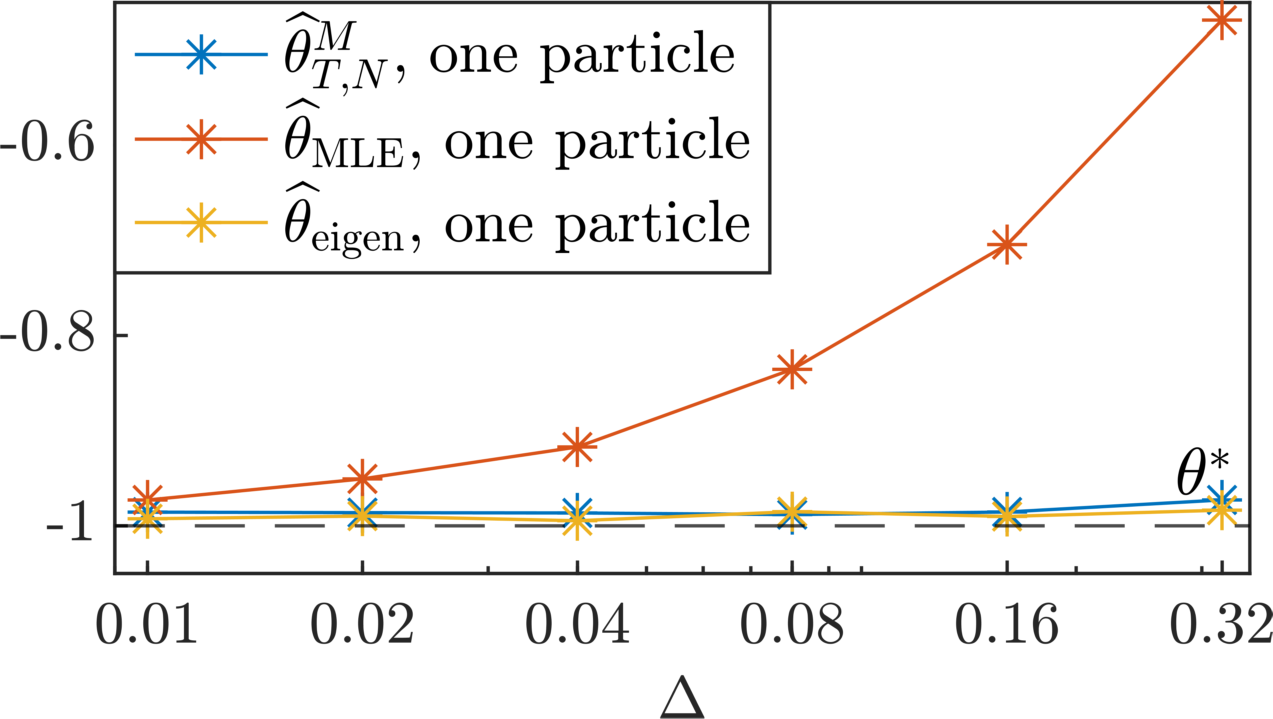} $\,$
\includegraphics[]{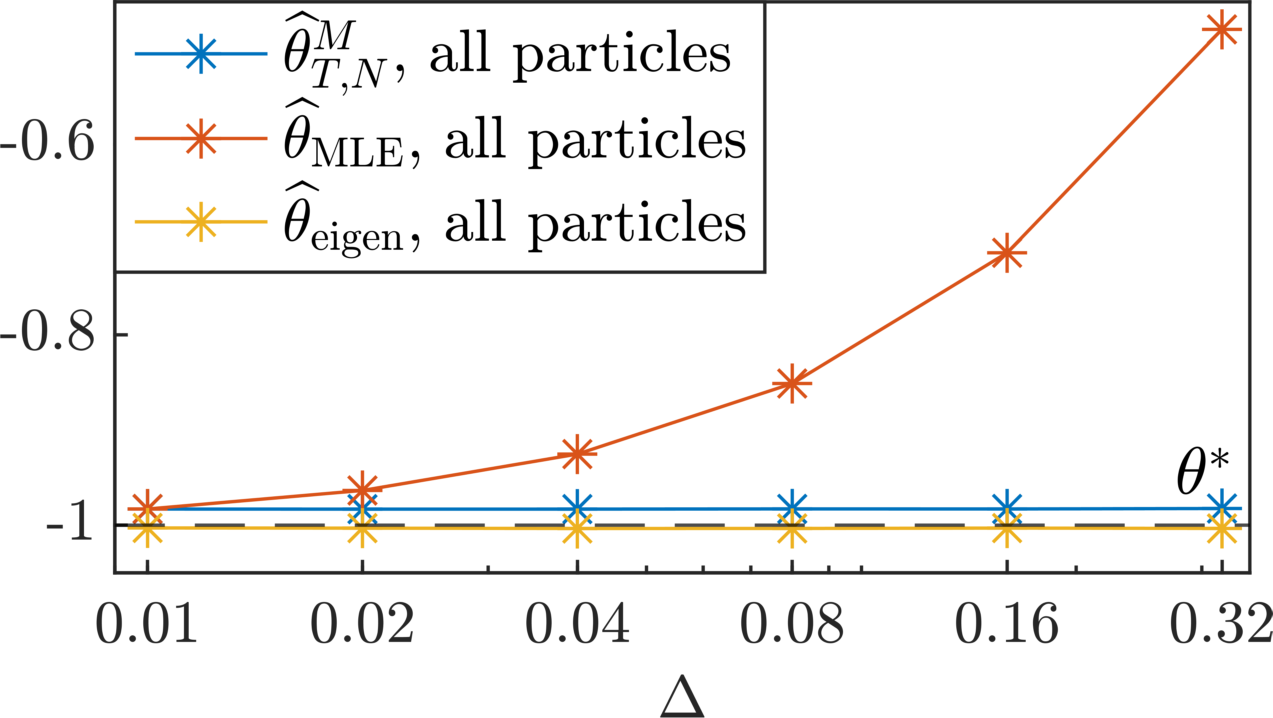}
\caption{Comparison between the estimator $\widehat \theta_{T,N}^M$ with the MLE and the eigenfunction estimator in case of discrete-time observations for different values of the sampling rate $\Delta$. Left: estimation obtained with one particle. Right: average of the estimations obtained with all the particles in the system.}
\label{fig:comparison_OU}
\end{figure}

In this section we compare our methodology with different approaches in the literature. In particular, we consider the maximum likelihood estimator (MLE) in \cite{Kas90} and the eigenfunction estimator in \cite{PaZ22}. We consider the setting of \cref{sec:OU}, we fix the diffusion coefficient $\sigma_0 = 1$ and we aim to efficiently estimate the drift coefficient $\alpha_1$, whose exact value is chosen to be $\alpha_1^* = -1$. Regarding the first estimator, in \cite{Kas90} the author rigorously derives the MLE of the drift coefficient in the context of \cref{sec:OU}, when the path of all the interacting particles in the system is observed. However, since in this work we are assuming to know only the trajectory of one single particle and since for large values of $N$ all the particles are approximately independent and identically distributed, we replace the sample mean with the expectation
with respect to the invariant measure, i.e., $\bar X_t^N$ = 0, and we ignore the sum over all the particles. On the other hand, the second estimator which we consider is suitable for parameter estimation when a sequence of discrete-time observations is given. Therefore, we verify how the three approaches perform for different values of the sampling rate, i.e., the distance between two consecutive observations. We remark that in order to construct our estimator using the method of moments and the MLE, all the integrals have to be discretized. Hence, letting $\Delta$ be the sampling rate and $I = T/\Delta$, we approximate the moments as
\begin{equation}
\widetilde{\mathbb M}_{T,N,\Delta}^{(r)} = \frac{1}{I} \sum_{i=0}^{I-1} (X_{i\Delta}^{(\bar n)})^r.
\end{equation}
Moreover, the other two estimators are written explicitly in \cite[Section 3.2]{PaZ22} and are given by
\begin{equation}
\begin{aligned}
\widehat \theta_{\mathrm{MLE}} &= 1 + \frac{\sum_{i=0}^{I-1} X_{i\Delta}^{(\bar n)} (X_{(i+1)\Delta}^{(\bar n)} - X_{i\Delta}^{(\bar n)})}{\Delta \sum_{i=0}^{I-1} (X_{i\Delta}^{(\bar n)})^2}, \\
\widehat \theta_{\mathrm{eigen}} &= 1 + \frac1\Delta \log \left(\frac{\sum_{i=0}^{I-1} X_{i\Delta}^{(\bar n)} X_{(i + 1) \Delta}^{(\bar n)}}{\sum_{i=0}^{I-1} (X_{i\Delta}^{(\bar n)})^2} \right).
\end{aligned}
\end{equation}
In \cref{fig:comparison_OU} we plot the estimated drift coefficient using only one particle of the system (left) and computing the average of the estimations obtained with all the particles (right) for different values of the sampling rate $\Delta = 0.01 \cdot 2^i$, with $i = 0, \ldots, 5$. We observe that the MLE is biased when the distance between two consecutive observations is not sufficiently small. On the other hand, our estimator and the eigenunction estimator are able to infer the right value of the drift coefficient independently of the sampling rate, and we notice that the eigenfunction estimator seems to give slightly better results. We remark however that, even if in this case the eigenfunction estimator has a closed-form expression, this approach is in general computationally much more expensive than the method of moments, as it requires the solution of the eigenvalue problem for the generator of the mean field limit.

\subsection{Bistable potential} \label{sec:num_bistable}

\begin{figure}
\centering

\hspace{0.5cm}
\begin{overpic}[]{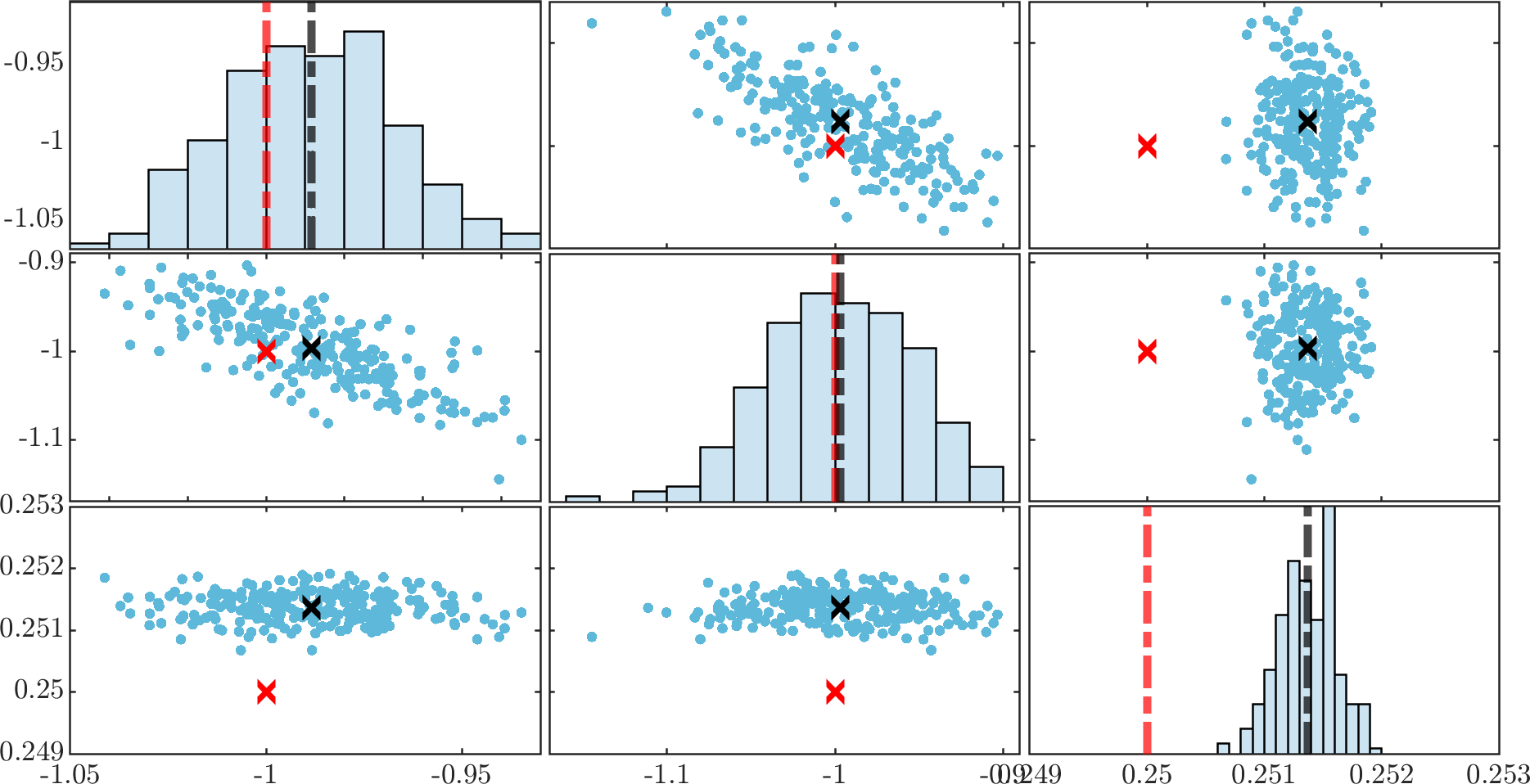} \put(80,-15){$\alpha$} \put(190,-15){$\gamma$} \put(310,-15){$\sigma$} \put(-25,40){$\sigma$} \put(-25,100){$\gamma$} \put(-25,160){$\alpha$} \end{overpic}
\vspace{0.5cm}

\vspace{0.5cm}

\hspace{0.5cm}
\begin{overpic}[]{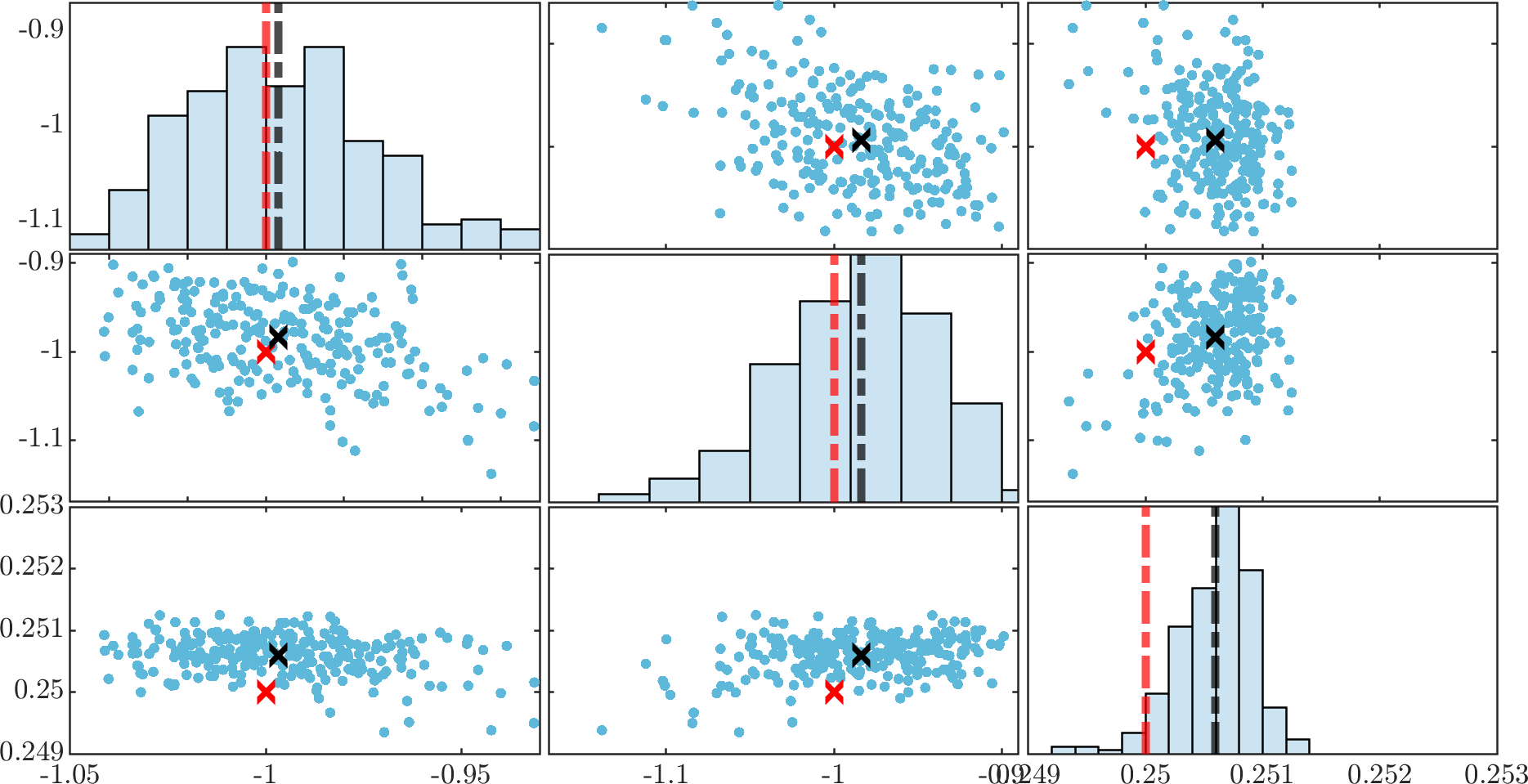} \put(80,-15){$\alpha$} \put(190,-15){$\gamma$} \put(310,-15){$\sigma$} \put(-25,40){$\sigma$} \put(-25,100){$\gamma$} \put(-25,160){$\alpha$} \end{overpic}
\vspace{0.5cm}
\caption{Inference of the drift, interaction and diffusion coefficients for the cases of bistable drift and quadratic interaction (top) and quadratic drift and bistable interaction (bottom) with constant diffusion. Diagonal: histogram of the estimations of each component obtained from all particles. Off-diagonal: scatter plot of the estimations obtained from all particles for two components at a time. Black and red stars/lines represent the average of the estimations and the exact value, respectively.}
\label{fig:bistable}
\end{figure}

We still consider the setting of \cref{ex:Malrieu}, but we now analyze the bistable potential
\begin{equation}
\mathcal V(x) = \frac{x^4}{4} - \frac{x^2}{2}.
\end{equation}
We consider two cases where the bistable potential appears either in the confining potential or in the interaction potential. In particular, we first set $f_1(x;\alpha) = \alpha \mathcal V'(x)$ and $g_1(x;\gamma) = \gamma x$ and then $f_2(x;\alpha) = \alpha x$ and $g_2(x;\gamma) = \gamma \mathcal V(x)$, and in both cases we set $h(x;\sigma) = \sigma$. Our goal is to estimate the three-dimensional coefficient $\theta^* = \begin{pmatrix} \alpha^* & \gamma^* & \sigma^* \end{pmatrix}^\top$ with exact parameters $\alpha^* = -1$, $\gamma^* = -1$ and $\sigma^* = 0.25$, employing the method of moments with $M = 4$ equations and fixing $T = 10^4$ as final time and $N = 250$ as number of particles. We remark that in this experiments the hypothesis of uniqueness of the invariant measure given in \cref{as:analysis} is not satisfied. We notice that in the first experiment, given this choice of the parameters, the mean field limit admits three different invariant distributions. Nevertheless, the numerical results presented in \cref{fig:bistable}, where we plot the estimations computed for all the particles, suggest that our methodology works even in presence of multiple stationary states. Indeed the exact moments with respect to the right invariant measure are automatically estimated by the empirical moments, and it is not necessary to know a priori the stationary state. In particular, we observe that for the drift and the interaction components the majority of the values are concentrated around the exact unknown, for which their average provides a reasonable approximation. On the other hand, this is not the case for the diffusion coefficient, for which the variance of the estimator is close to zero. However, we notice that the bias is almost negligible and therefore all the particles give a good approximation of the true value. This is caused by the fact that we are including equation \eqref{eq:equationQV} for the quadratic variation, which holds true also for the interacting particles and not only for the mean field limit.

\subsection{Multiplicative noise}

\begin{figure}
\centering
\includegraphics[]{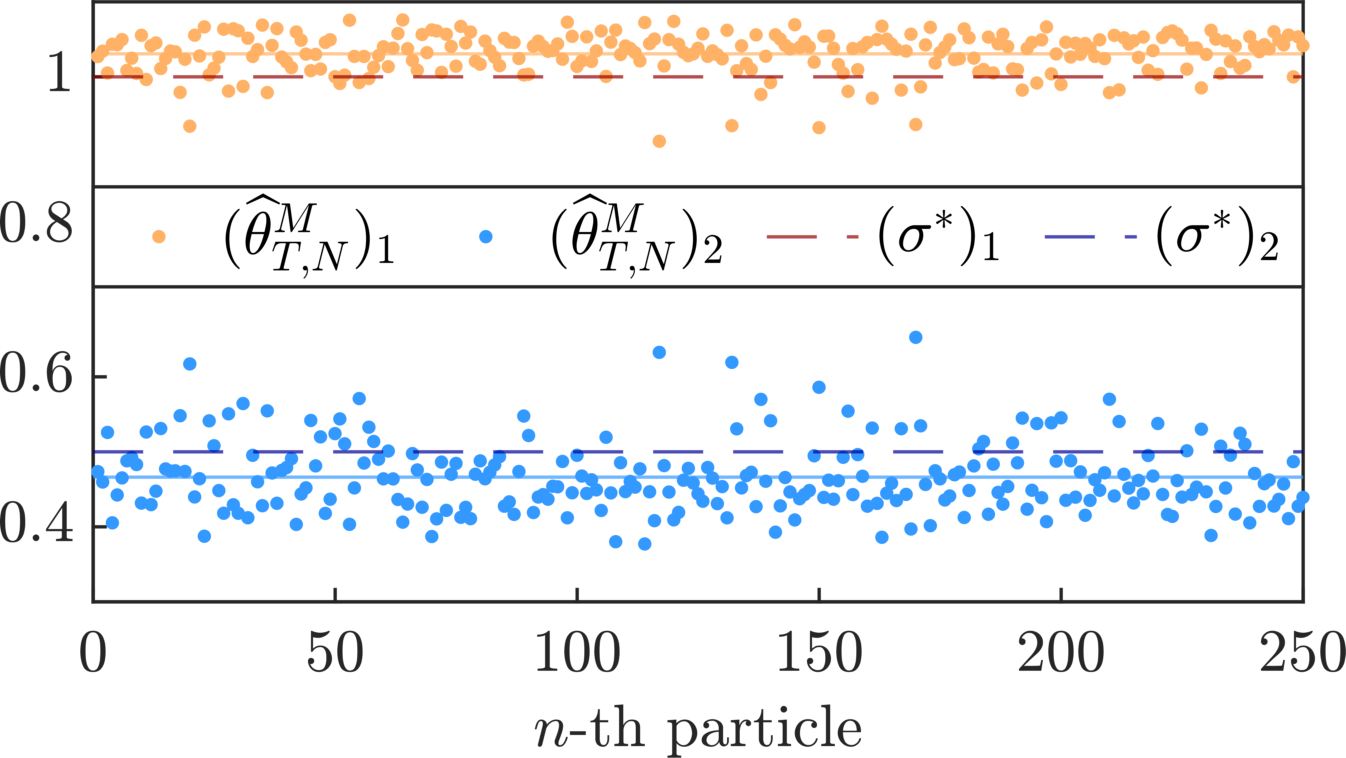}
\caption{Scatter plot for the inference of the two-dimensional diffusion coefficient for the case of simultaneous additive and multiplicative noise.}
\label{fig:multiplicative_noise}
\end{figure}

We now focus on estimating a more complex diffusion term and we consider the functions
\begin{equation}
f(x;\alpha) = -x, \qquad g(x;\gamma) = -x, \qquad \text{and} \qquad h(x;\sigma) = \sigma_0 + \sigma_2 x^2,
\end{equation}
with exact unknown components $\sigma_0^* = 1$ and $\sigma_2^* = 0.5$, and we write $\theta^* = \sigma $. We fix the final time $T = 10^4$ and the number of particles $N = 250$. In \cref{fig:multiplicative_noise} we plot the estimation computed for each particle using $M = 3$ moments equations to construct the estimator
\begin{equation}
\begin{aligned}
\sigma_0 + \widetilde{\mathbb M}_{T,N}^{(2)} \sigma_2 &= 2 \widetilde{\mathbb M}_{T,N}^{(2)} - (\widetilde{\mathbb M}_{T,N}^{(1)})^2, \\
2 \widetilde{\mathbb M}_{T,N}^{(1)} \sigma_0 + 2 \widetilde{\mathbb M}_{T,N}^{(3)} \sigma_2 &= 2 \widetilde{\mathbb M}_{T,N}^{(3)} - \widetilde{\mathbb M}_{T,N}^{(1)} \widetilde{\mathbb M}_{T,N}^{(2)}, \\
3 \widetilde{\mathbb M}_{T,N}^{(2)} \sigma_0 + 3 \widetilde{\mathbb M}_{T,N}^{(3)} \sigma_2 &= 2 \widetilde{\mathbb M}_{T,N}^{(4)} - \widetilde{\mathbb M}_{T,N}^{(1)} \widetilde{\mathbb M}_{T,N}^{(3)}, \\
\sigma_0 + \widetilde{\mathbb M}_{T,N}^{(2)} \sigma_2 &= \widetilde Q_{T,N}.
\end{aligned}
\end{equation}
We observe that the first and the second components are slightly overestimated and underestimated, respectively, but in average the estimators provide a reliable approximation of the correct unknowns.

\subsection{Interacting Fitzhugh--Nagumo Neurons} \label{sec:num_DaiPra}

\begin{figure}
\centering
\hspace{0.5cm}
\begin{overpic}[]{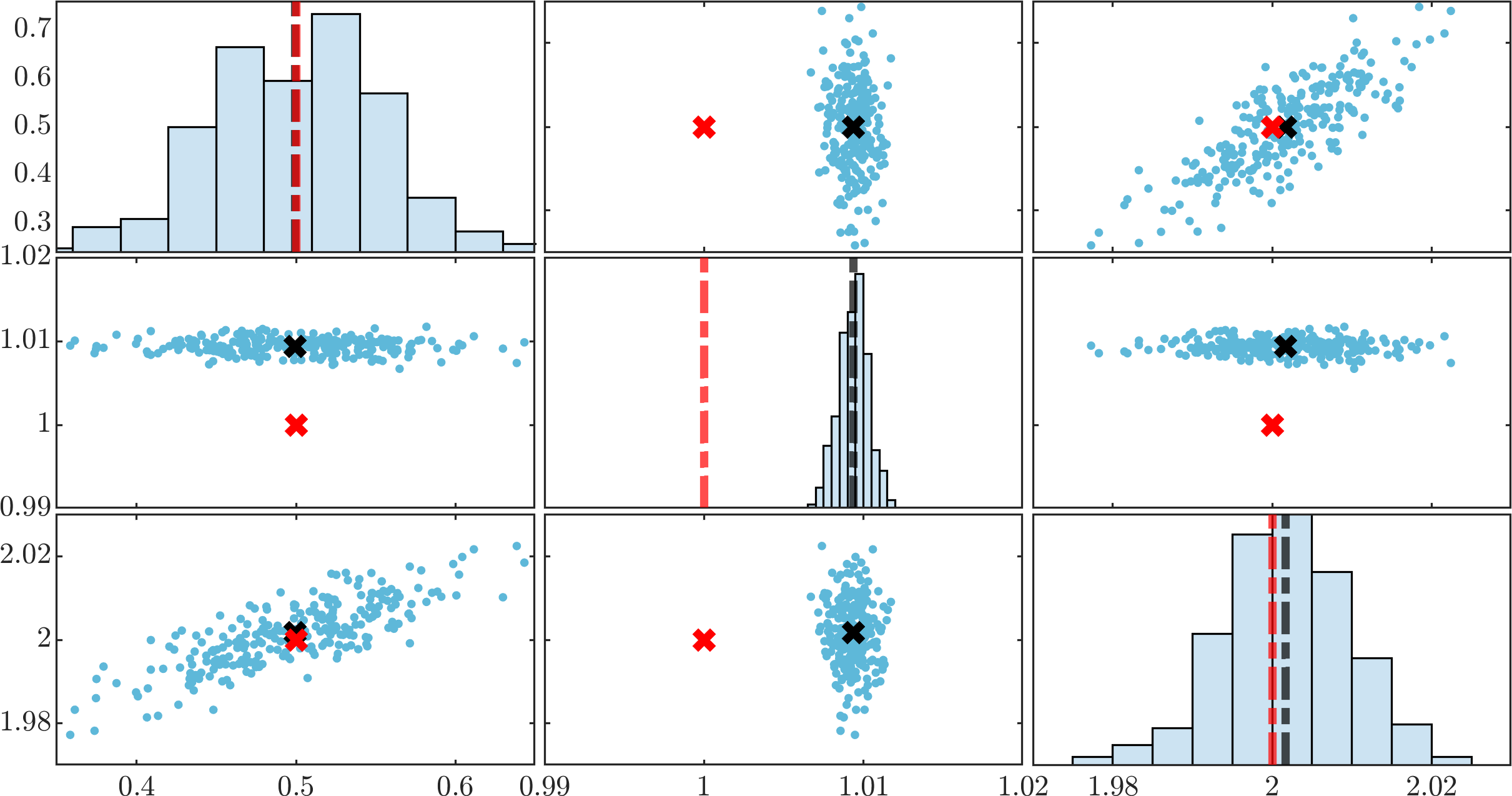} \put(70,-15){$\gamma$} \put(190,-15){$\sigma$} \put(300,-15){$a$} \put(-25,40){$a$} \put(-25,95){$\sigma$} \put(-25,150){$\gamma$} \end{overpic}
\vspace{0.5cm}
\caption{Inference of the interaction, diffusion and kinetic parameters in the model of Fitzhugh--Nagumo for interacting neurons. Diagonal: histogram of the estimations of each component obtained from all particles. Off-diagonal: scatter plot of the estimations obtained from all particles for two components at a time. Black and red stars/lines represent the average of the estimations and the exact value, respectively.}
\label{fig:DaiPra}
\end{figure}

In this last numerical experiment we consider the system of noisy interacting Fitzhugh--Nagumo SDEs which describe the evolution of the membrane potential of interacting neurons \cite[Section 4.2]{Dai19}
\begin{equation}
\begin{aligned}
\d X_t^{(n)} &= \left( X_t^{(n)} - \frac13 (X_t^{(n)})^3 + Y_t^{(n)} \right) \dd t + \gamma \left( X_t^{(n)} - \frac1N \sum_{i=1}^N X_t^{(i)} \right) \dd t + \sqrt{2\sigma} \dd B_t^{(n)}, \\
\d Y_t^{(n)} &= (a - X_t^{(n)}) \dd t,
\end{aligned}
\end{equation}
where $\gamma$ controls the interaction between neurons, $\sigma$ is the diffusion coefficient and $a$ is a kinetic parameter related with input current and synaptic conductance. The corresponding mean field limit then reads
\begin{equation}
\begin{aligned}
\d X_t &= \left( X_t - \frac13 X_t^3 + Y_t \right) \dd t + \gamma \left( X_t - \E[X_t] \right) \dd t + \sqrt{2\sigma} \dd B_t, \\
\d Y_t &= (a - X_t) \dd t.
\end{aligned}
\end{equation}
We note that this mean field SDE is degenerate, since noise acts directly only on the equation for $X_t$. Therefore, the analysis presented in the previous section needs to be modified to take into account the hypoelliptic nature of the dynamics. A rigorous analysis of the mean field Fitzhugh--Nagumo model can be found in \cite{MQT16}. Inference for hypoelliptic mean field SDEs is a very interesting problem that we will return to in future work. The unknown coefficient which we aim to infer is $\theta^* = \begin{pmatrix} \gamma^* & \sigma^* & a^* \end{pmatrix}^\top$ with exact values $\gamma^* = 0.5$, $\sigma^* = 1$ and $a^* = 2$. Moreover, the number of neurons is $N = 250$, the final time is $T = 10^4$ and we employ $M = 4$ moments equations together with the equation given by the quadratic variation
\begin{equation}
\begin{aligned}
a &= \widetilde{\mathbb M}_{T,N}^{(1,0)}, \\
\left( \widetilde{\mathbb M}_{T,N}^{(2,0)} - (\widetilde{\mathbb M}_{T,N}^{(1,0)})^2 \right) \gamma + \sigma &= \frac13 \widetilde{\mathbb M}_{T,N}^{(4,0)} - \widetilde{\mathbb M}_{T,N}^{(2,0)} - \widetilde{\mathbb M}_{T,N}^{(1,1)}, \\
\widetilde{\mathbb M}_{T,N}^{(0,1)} a &= \widetilde{\mathbb M}_{T,N}^{(1,1)}, \\
\left( \widetilde{\mathbb M}_{T,N}^{(1,1)} - \widetilde{\mathbb M}_{T,N}^{(1,0)} \widetilde{\mathbb M}_{T,N}^{(0,1)} \right) \gamma + \widetilde{\mathbb M}_{T,N}^{(1,0)} a &= \frac13 \widetilde{\mathbb M}_{T,N}^{(3,1)} + \widetilde{\mathbb M}_{T,N}^{(2,0)} - \widetilde{\mathbb M}_{T,N}^{(1,1)} - \widetilde{\mathbb M}_{T,N}^{(0,2)}, \\
\sigma &= \widetilde Q_{T,N},
\end{aligned}
\end{equation}
where $\widetilde{\mathbb M}_{T,N}^{(i,j)}$ stands for the empirical approximations of the moments $\E^\mu \left[X^i Y^j\right]$ with respect to the invariant distribution $\mu$ of the mean field dynamics. The numerical results are shown in \cref{fig:DaiPra}, where we plot the estimations obtained with all the interacting neurons. We observe that our methodology is able to correctly infer the unknown parameters, in the sense that the majority of the obtained values are concentrated around the true unknowns. Regarding the diffusion coefficient, we can repeat the same considerations as in \cref{sec:num_bistable}, i.e., that even if the exact parameter is not included in the range of the estimations, the bias is however close to zero.

\section{Conclusion} \label{sec:conclusion}

In this work we considered the framework of large interacting particle systems in one dimension with polynomial confining and interaction potentials, as well as diffusion function, with unknown coefficients. We proposed a novel estimator for inferring these parameters from continuous-time observations of one single particle in the system. Our approach consists in writing a set of linear equations for the unknown coefficients by multiplying the stationary Fokker--Planck equation by monomials, and thus obtaining equations which depend on the moments of the invariant measure of the mean field limit. An approximation of the moments with respect to the invariant state can be obtained by means of the ergodic theorem and using the available path of data, yielding a linear system for the unknown parameters. Moreover, we considered an additional equation employing the definition of the quadratic variation of a stochastic process. We then defined our estimator to be the least squares solution of the final augmented linear system. Under the assumption of ergodicity and uniqueness of the invariant state of the limiting Mckean SDE, we proved that our estimator is asymptotically unbiased when the number of particles and the time horizon tend to infinity, and we also provided a rate of convergence with respect to these two quantities. We remark that this technique is easy to implement since we only need to approximate the moments of the invariant measure of the mean field limit and compute the least squares solution of a linear system. Nevertheless, the numerical experiments presented above demonstrate the accuracy of the obtained estimations even in case of multiple stationary states for the mean field limit and in the multidimensional setting.

This work has a natural interesting direction of research, namely the extension of our methodology to the nonparametric setting, i.e., when the functional form of the drift, interaction and diffusion functions are not known. In particular, if these functions are sufficiently regular, it would be interesting to first approximate them by a truncated Taylor series and then infer the coefficients of the Taylor expansion employing our approach. In this case the theoretical analysis will be based on the study of three simultaneous limits because, in addition to the number of particles and the final time of observation, another quantity of interest will be the dimension of the truncated basis. We will return to this problem in future work.

\subsection*{Acknowledgements} 

We thank the anonymous referees for the extremely careful reading and for their interesting comments and useful suggestions which helped improve and clarify this manuscript. The work of GAP was partially funded by the EPSRC, grant number EP/P031587/1, and by JPMorgan Chase \& Co through a Faculty Award, 2019 and 2021. The work of AZ was partially supported by the Swiss National Science Foundation, under grant No. 200020\_172710.

\bibliographystyle{siamnodash}
\bibliography{biblio}

\end{document}